\title{Une expression spectrale pour une certaine intégrale orbitale tordue}
\author{Joël Cohen}
\newtheorem{thm}{Théorème}[section]
\newtheorem{defn}[thm]{Définition}
\newtheorem{prop}[thm]{Proposition}
\newtheorem{lemme}[thm]{Lemme}
\newtheorem{rem}[thm]{Remarque}
\newtheorem{ex}[thm]{Exemple}
\newtheorem{cor}[thm]{Corollaire}
\newtheorem{conj}[thm]{Conjecture}
\newcommand{\N}{\mathbb{N}}
\newcommand{\Z}{\mathbb{Z}}
\newcommand{\Q}{\mathbb{Q}}
\newcommand{\R}{\mathbb{R}}
\newcommand{\C}{\mathbb{C}}
\newcommand{\F}{\mathbb{F}}
\newcommand{\g}{\lie{g}}
\newcommand{\h}{\lie{h}}
\newcommand{\Orb}{\mathcal{O}}
\newcommand{\OrbInt}{J}
\newcommand{\Hecke}{\mathcal{H}}
\newcommand{\hecke}{\mathcal{C}_{c}^{\infty}}
\newcommand{\Ind}{\operatorname{Ind}}
\newcommand{\Stab}{\operatorname{Stab}}
\newcommand{\Irr}{\operatorname{Irr}}
\newcommand{\GL}{\operatorname{GL}}
\newcommand{\gl}{\lie{gl}}
\newcommand{\SO}{\operatorname{SO}}
\newcommand{\so}{\lie{so}}
\newcommand{\Sp}{\operatorname{Sp}}
\newcommand{\Hom}{\operatorname{Hom}}
\newcommand{\Aut}{\operatorname{Aut}}
\newcommand{\Supp}{\operatorname{Supp}}
\newcommand{\Rep}{\mathcal{R}} 
\newcommand{\D}{\operatorname{d}} 
\newcommand{\st}{\operatorname{st}}
\newcommand{\reg}{\textnormal{reg}}
\newcommand{\temp}{\operatorname{temp}}
\newcommand{\pars}[1]{\ensuremath{\left( #1 \right)}} 
\newcommand{\braces}[1]{\ensuremath{\left\{ #1 \right\}}} 
\newcommand{\set}[1]{\ensuremath{\left\{ #1 \right\}}} 
\newcommand{\scal}[1]{\ensuremath{\left\langle #1 \right\rangle}} 
\newcommand{\ints}[1]{\ensuremath{\left[\!\left[ #1 \right]\!\right]}} 
\newcommand{\mat}[1]{\ensuremath{\pars{\begin{matrix} #1 \end{matrix}}}} 
\newcommand{\alg}[1]{\boldsymbol{#1}} 
\newcommand{\lie}[1]{\ensuremath{\mathfrak{#1}}} 
\def\revdots{\mathinner{\mkern1mu\raise1pt\vbox{\kern7pt\hbox{.}}\mkern2mu\raise4pt\hbox{.}\mkern2mu \raise7pt\hbox{.}\mkern1mu}} 
\newcommand{\Tr}{\operatorname{Tr}}
\newcommand{\Id}{\operatorname{Id}}
\newcommand{\Ad}{\operatorname{Ad}}
\newcommand{\Inn}{\operatorname{Inn}}
\newcommand{\diag}{\operatorname{diag}}
\newcommand{\Gal}{\operatorname{Gal}}
\newcommand{\transfertZero}{\mathcal{C}_0} 
\newcommand{\adherenceIntOrb}{\mathcal{C}_0} 
\newcommand{\distr}{\mathcal{D}} 
\begin{document}

\ifpdf
\DeclareGraphicsExtensions{.pdf, .jpg, .tif}
\else
\DeclareGraphicsExtensions{.eps, .jpg}
\fi

\maketitle

\tableofcontents

\ \\
\ \\


Soit $F$ un corps $p$-adique, $G = \GL_{2n}(F)$ et $\theta_0$ l'automorphisme extérieur de $G$ qui préserve une paire de Borel épinglée. On considère l'ensemble $\widetilde{G} = G \theta_0$ sur lequel $G$ agit par conjugaison et l'intégrale orbitale $J_{\widetilde{G}}(\theta_0, f)$ en $\theta_0$. On donne une expression spectrale type Plancherel-Harish-Chandra pour cette intégrale orbitale, c'est-à-dire comme une intégrale sur les représentations irréductibles tempérées auto-duales de $G$ dites ``symplectiques'' (c'est-à-dire dont le paramètre de Langlands se factorise par $\Sp_{2n}(\C)$). La méthode utilise le transfert endoscopique vers $\SO_{2n+1}$. On démontre au passage que la mesure de Plancherel est constante sur les $L$-paquets.

Ce travail fait partie de ma thèse de doctorat préparée sous la direction de Volker Heiermann. Cette thèse a été financée par l'Agence Nationale de la Recherche dans le cadre du projet ANR blanc JIVARO (référence ANR-08-BLAN-0259-02). Je remercie Jean-Loup Waldspurger pour les suggestions et commentaires utiles qu'il m'a apporté.

\section{Introduction} 
\label{sec:introduction}

Soit $F$ un corps local non archimédien de caractéristique nulle, c'est-dire une extension finie de $\Q_p$ pour un certain nombre premier $p$. On note $\lie{o}$ l'anneau des entiers de $F$, $p$ la caractéristique résiduelle de $F$, $\F_q$ le corps résiduel de $F$ (ici $q$ est le cardinal du corps résiduel). On se donne $\varpi \in \lie{o}$ une uniformisante, on note $|\cdot|_F$ la valeur absolue de $F$ normalisée de telle sorte que $|\varpi| = q^{-1}$, et $v_F$ la valuation normalisée par $v_F(\varpi) = 1$.

Par convention, si $\alg{X}$ est une variété algébrique définie sur $F$ (noté par une lettre grasse), on note $X = \alg{X}(F)$ (même lettre non grasse) l'ensemble de ses points rationnels sur $F$. En particulier on se donne $\alg{G}$ un groupe algébrique défini sur $F$, réductif, connexe et quasi-déployé, $\alg{\g}$ son algèbre de Lie, et on note $G = \alg{G}(F)$ et $\g = \alg{\g}(F)$ les ensembles respectifs de leurs points sur $F$.

Si $X$ est un espace topologique localement compact totalement discontinu, on note $\hecke(X)$ l'espace des fonctions à valeurs complexes sur $X$ localement constantes à support compact, et son dual $\distr(X) = \hecke(X)^*$ l'espace des distributions sur $X$.

Si $G$ agit sur $X$, alors on en déduit des actions sur les espaces $\hecke(X)$ et $\distr(X)$ (via $(g.f)(x) = f(g^{-1}.x)$ et $D(f) = D(g^{-1}.f)$ pour $g \in G$, $x \in X$, $f \in \hecke(X)$ et $D \in \distr(X)$). Si $x \in X$, alors on note $G.x$ l'orbite de $x$, et l'intégrale orbitale en $x$, est la distribution $J_x$ qui à $f \in \hecke(X)$ associe l'intégrale de $f$ sur l'orbite $G.x$ de $x$ (sous réserve de convergence et avec le choix d'une mesure convenable sur $G.x$).

\[
	J_x(f) = \int_{G.x} f(y) \, \D y = \int_{G/G_x} f(g.x) \, \D g
\]

On considère $\alg{G} = \GL_{N} \rtimes \scal{\theta}$ où $\theta$ agit sur $\GL_N$ par l'automorphisme involutif $g \mapsto g^{-t}$ (où $g^{-t}$ désigne l'inverse de la transposée
C'est un groupe réductif non connexe dont on note $\alg{G^0}$ la composante neutre, et $\widetilde{\alg{G^0}}$ l'autre composante irréductible. On note $G = \alg{G}(F)$, $G^0 = \alg{G^0}(F)$ et $\widetilde{G^0} = \widetilde{\alg{G^0}}(F)$.

L'ensemble des $\gamma \theta \in \widetilde{G^0}$ tels que $\gamma$ est antisymétrique forme une classe de conjugaison stable. On considère l'intégrale orbitale stable associée, c'est-à-dire la distribution $\OrbInt$ qui à une fonction $f \in \hecke(G)$ associe l'intégrale de $f$ sur cette classe de conjugaison. Chenevier et Clozel proposent dans \cite{Chenevier} une expression conjecturale de cette intégrale orbitale comme une intégrale sur le spectre tempéré auto-dual de $G^0$ de la forme
\[
	\OrbInt(f) = \int_{\Irr_{\temp}^{\theta}(G^0)} \Tr_{\widetilde{G^0}}(\pi^+(f)) \, \D \pi
\]
où $\Irr_{\temp}^{\theta}(G^0)$ désigne l'ensemble des (classes d'isomorphisme de) représentations de $G^0$ qui sont tempérées irréductibles et $\theta$-stables (ou, ce qui revient au même, auto-duales), $\pi^+$ est un prolongement de $\pi$ à $G$ obtenu via le choix d'un entrelacement $\pi(\theta)$ entre $\pi$ et $\pi \circ \theta$ involutif\,\footnote{Si on réalise les représentations $\pi$ et $\pi \circ \theta$ dans le même espace vectoriel.}, $\Tr_{\widetilde{G^0}}\pi^+$ désigne la restriction à $\widetilde{G^0}$ du caractère de $\pi^+$, et $\D \pi$ serait une mesure positive à support dans les représentations dites symplectiques, c'est-à-dire dont le paramètre de Langlands préserve une forme symplectique.

Nous établissons une telle expression dans le théorème \ref{thm:conjecture}. Notre démarche est la suivante. Le groupe $\alg{G'} = \SO(2n+1)$ est un groupe endoscopique de $\alg{G}$ et les distributions stables sur $G'$ se transfèrent vers $\widetilde{G^0}$ par transfert endoscopique. Notamment on montre dans la proposition \ref{prop:transfertIntegraleOrbitale} que le transfert de l'intégrale orbitale en $1$ sur $G'$ est $\lambda \OrbInt_{\widetilde{G^0}}(\eta, \cdot)$ avec $\lambda > 0$. Ce qui permet, en appliquant la formule de Plancherel sur $G'$ d'obtenir une expression du type
\[
	\OrbInt_{\widetilde{G^0}}(\eta, f) = \frac{1}{\lambda} \int_{\Irr_{\temp}(G')} \Tr(\pi'(f')) \D \pi'
\]
Où $\Irr_{\temp}(G')$ désigne l'ensemble des (classes d'isomorphisme de) représentations de $G'$ qui sont tempérées irréductibles, $f' : G' \to \C$ est un transfert endoscopique de $f$ et $\D \pi'$ est la mesure de Plancherel sur $G'$. Pour terminer, on transfère l'intégrande à $\widetilde{G^0}$. En effet, la mesure de Plancherel est constante sur les $L$-paquets de $G'$ (voir corollaire \ref{cor:plancherelLpaquets}), on peut donc réunir les termes d'un même $L$-paquet. Puis en utilisant des résultats de J. Arthur (conditionnels à la stabilisation de la formule des traces tordue), si $\Pi'$ est un $L$-paquet tempéré de $G'$ et $\sum_{\pi' \in \Pi'} \Tr \pi'$ son caractère, alors c'est une distribution stable qui se transfère à $\widetilde{G^0}$ en la distribution $\Tr_{\widetilde{G^0}} \pi^+$ pour une unique représentation $\pi$ tempérée symplectique de $G^0$ (voir \ref{thm:arthur}).


\section{Le groupe $\GL_{2n}$ tordu} 
\label{sec:groupe_symplectique}


\subsection{Les automorphismes $\theta$ et $\theta_0$} 
\label{sub:l_automorphisme_theta_}

Dans ce document, on s'intéresse au ``groupe $\GL_N$ tordu'' pour $N$ pair. Décrivons cela plus en détails. On fixe un entier $n \in \N^*$, et on pose $N = 2n$ et $\alg{G}^0 = \GL_{N}$ et $G^0 = \alg{G}^0(F)$ le groupe des points sur $F$. On définit l'automorphisme $\theta : \alg{G}^0 \to \alg{G}^0$ par $\theta(g) = g^{-t}$, où $g^{-t}$ désigne l'inverse de la transposée de $g$ (ou encore la transposée de l'inverse puisque les deux opérations commutent). Par ailleurs on définit la matrice $J_0 \in G^0$ par
\[
	J_0 = \pars{
	\begin{matrix}
		&&&&1\\
		&&&-1&\\
		&&\revdots&&\\
		&1&&&\\
		-1&&&&\\
	\end{matrix}
	}
\]
La matrice $J_0$ vérifie $J_0^2 = - I_{N}$ et $\theta(J_0) = J_0$. On définit l'automorphisme $\theta_0 : \alg{G}^0 \to \alg{G}^0$ pour tout $g \in \alg{G}^0$ par
\[
	\theta_0(g) = J_0 \, g^{-t} J_0^{-1} = \theta(J_0 g J_0^{-1})
\]
C'est-à-dire que l'on a $\theta_0 = \theta^{J_0}$. Les automorphismes $\theta$ et $\theta_0$ sont d'ordre $2$ (ie ce sont des involutions). On considère $\alg{G} = \alg{G}^0 \rtimes \langle \theta \rangle$ le groupe $\alg{G}^0$ tordu par $\theta$
, et on note $G = \alg{G}(F)$ le groupe de ses points sur $F$. C'est un groupe réductif non connexe dont la composante neutre est $\alg{G}^0$ et le groupe des composantes $\alg{G} / \alg{G}^0 \simeq \langle \theta \rangle \simeq \Z / 2 \Z$.

L'autre composante irréductible est $\widetilde{\alg{G}^0} = \alg{G}^0 \theta$, et on note de même $\widetilde{G^0} = \widetilde{\alg{G}^0}(F)$. Le couple $(\alg{G}^0, \widetilde{\alg{G}^0})$, c'est un espace tordu au sens de  Labesse (voir \cite{Labesse} chapitre 2, sections 1 et 2). En particulier, $\widetilde{\alg{G}^0}$ est une variété algébrique isomorphe à $\alg{G}^0$ via $f_{\theta} : g_0 \mapsto g_0 \theta$, sur laquelle le groupe $\alg{G}^0$ agit transitivement à gauche et à droite par multiplication.

On a la décomposition $\alg{G} = \alg{G}^0 \sqcup \widetilde{\alg{G}^0}$. On remarque que l'on peut remplacer $\theta$ par n'importe quel conjugué (notamment $\theta_0$) dans la définition de $\alg{G}$ et $\widetilde{\alg{G}^0}$ (puisque $\theta$ et $\theta_0$ sont congrus modulo $\Inn(G)$, on a $\alg{G}^0 \rtimes \langle \theta_0 \rangle = \alg{G}^0 \rtimes \langle \theta \rangle$.
Au final, le choix de $\theta$, $\theta_0$ (ou d'un autre conjugué) porte à peu de conséquences à quelques ajustements mineurs près qu'on précise dans la suite.

\subsection{Une représentation linéaire de $G$} 
\label{sub:representation_lineaire_de_g_}

Le groupe $\alg{G}$ est linéaire algébrique donc peut se plonger dans un groupe linéaire $\GL_k$ pour un certain $k \in \N$. Notamment, on dispose d'un plongement $i : \alg{G} \to \GL_{2N}$ explicite donné par

\[
	i(g_0) = \mat{g_0 & 0\\ 0& \theta(g_0)} \qquad \textrm{ et } \qquad i(\theta) = \mat{0 & I_N\\ I_N& 0}
\]
pour $g_0 \in \alg{G}^0$. Alors on a

\[
	i(\widetilde{\alg{G}^0}) = \set{\mat{0 & g_0 \\ \theta(g_0) & 0}, \, g_0 \in \alg{G}^0}
\]
Et d'après la définition, on a par ailleurs
\[
	i(\theta_0) = \mat{0 & J_0\\ J_0 & 0}
\]

\begin{lemme}
	Soit $g_0 \in G^0$, alors $g_0 \theta \in G$ est semi-simple (resp. semi-simple fortement régulier) si et seulement si $g_0\theta(g_0) \in G^0$ l'est.
\end{lemme}
\begin{proof}
	D'abord on remarque que $g_0 \in G^0$ est semi-simple (respectivement nilpotent) si et seulement si $\theta(g_0)$ l'est. En conséquence, $g_0 \in G^0$ est semi-simple si et seulement si $i(g_0)$ l'est.
	Comme $(g_0\theta)^2 = g_0 \theta(g_0)$, il est clair que si $g_0\theta$ est semi-simple, alors $g_0 \theta(g_0)$ l'est aussi. Réciproquement, supposons $g_0 \theta(g_0)$ semi-simple. On écrit la décomposition de Jordan-Chevalley de $g_0 \theta = g_{ss} g_u$ avec $g_{ss}, g_u \in G$ qui commutent, $i(g_{ss})$ semi-simple et $i(g_u)$ unipotent. On a alors $g_0 \theta(g_0) = g_{ss}^2 g_u^2$. Mais par unicité de la décomposition de Jordan-Chevalley, $g_u^2 = 1$, donc $g_u = 1$.	
	
	 Par ailleurs, les polynômes caractéristiques de $g_0\theta(g_0)$ et $i(g_0\theta)$ sont reliés par la relation  $\chi_{i(g_0 \theta)}(X) = \chi_{g_0 \theta(g_0)}(X^2)$ (il s'agit d'un calcul de déterminant par blocs). En particulier, $\chi_{g_0 \theta(g_0)}$ est scindé à racines simples si et seulement si $\chi_{i(g_0 \theta)}$ l'est aussi ($0$ n'est pas racine). Ce qui conclut pour le cas fortement régulier.
\end{proof}


Soit $U \subset G^0$ le sous-groupe de $G^0$ des matrices triangulaires supérieures unipotentes. Soit $\phi : F \to \C^*$ un caractère de $F$, alors on définit un caractère $\varphi : U \to \C^*$ de $U$ par la formule

\[
	\varphi \mat{
	1 & x_1		& *\\
	  & \ddots	& x_{N-1}\\
	  &			& 1 \quad
	} = \phi(x_1 + \cdots + x_{N-1})
\]
On vérifie alors que $\theta_0(U) = U$, et que $\varphi = \varphi \circ \theta_0$.

\subsection{Représentations lisses de $G^0 \rtimes \langle \theta \rangle$ et représentations $\theta$-stables de $G^0$} 
\label{sub:representations_de_g_rtimes_}

Les groupes $G$ et $G^0$ sont des groupes localement compacts totalement discontinus. On note $\Rep(G)$ et $\Rep(G^0)$ les catégories des représentations complexes lisses de $G$ et $G^0$ respectivement. De même on note $\Hecke(G)$ et $\Hecke(G^0)$ les algèbres de Hecke correspondantes, et $\Hecke(\widetilde{G^0})$ le sous-espace des fonctions à support dans $\widetilde{G^0}$. On dispose d'une injection naturelle $\Hecke(G^0) \hookrightarrow \Hecke(G)$, qui munit $\Hecke(G)$ d'une structure de $\Hecke(G^0)$-module à gauche et à droite et pour laquelle $\Hecke(\widetilde{G^0})$ est un sous-module. En tant que $\Hecke(G^0)$-bi-module, on a la décomposition

\[
	\Hecke(G) = \Hecke(G^0) \oplus \Hecke(\widetilde{G^0}) 
\]
L'application $f \mapsto f * \delta_{\theta}$, réalise une bijection entre $\Hecke(\widetilde{G^0})$ et $\Hecke(G^0)$ qui rend $\Hecke(\widetilde{G^0})$ est isomorphe à $\Hecke(G^0)$ en tant que module à gauche, et l'action à droite d'une fonction $f$ revient sur $\Hecke(G^0)$ à la multiplication par (produit de convolution avec) $f \circ \theta$.

	On dira qu'une représentation $(\pi, V) \in \Rep(G^0)$ de $G^0$ est $\theta$-stable s'il existe un $G$-isomorphisme entre $(\pi,V)$ et $(\pi \circ \theta, V)$. Et on note $\Rep(G^0)^{\theta}$ la sous-catégorie pleine de $\Rep(G^0)$ des représentations $\theta$-stables.
	

%
%
%

Si $(\pi, V) \in \Rep(G)$ est une représentation lisse de $G$, on note $\pi_0 = \pi_{|G^0}$ et $\widetilde{\pi_0} = \pi_{|\widetilde{G^0}}$\,\footnote{il ne s'agit par d'une représentation puisque $\widetilde{G^0}$ n'est pas un groupe} alors $(\widetilde{\pi_0}, \pi_0)$ est une représentation tordue pour le couple $(\widetilde{G^0}, 1)$ au sens de \cite{Labesse} (chapitre 2, section 3). Une représentation lisse $(\pi,V)$ de $G$ est entièrement déterminée par la donnée du triplet $(V, \pi_{|G^0}, \pi(\theta))$. On vérifie que $\pi(\theta)$ est un automorphisme d'ordre $2$ de $V$ qui entrelace $\pi$ et $\pi \circ \theta$, et la représentation $\pi_{|G^0}$ est donc $\theta$-stable. Réciproquement, étant donné un triplet $(V, \pi, A)$ où $(\pi,V)$ est une représentation lisse $\theta$-stable de $G^0$ et $A$ un automorphisme d'ordre $2$ de $V$ qui entrelace $\pi$ et $\pi \circ \theta$, alors on peut construire une représentation lisse de $G$ en posant $\pi(\theta) = A$. Une représentation lisse de $G$ est donc la donnée d'une représentation $\theta$-stable de $G^0$ et d'un choix d'un isomorphisme $A \in \Hom_{G^0}(\pi, \pi \circ \theta)$ d'ordre $2$ en tant qu'automorphisme de $V$. Notons qu'à $G$-isomorphisme près, l'opérateur $A$ est unique au signe près. En effet, si $\pi, \pi' \in \Rep(G)$ sont des représentations de $G$, leurs restrictions à $G^0$ sont isomorphes si et seulement si $\pi' \simeq \pi \otimes \chi$ pour $\chi$ un caractère de $\langle \theta \rangle$, ce qui veut dire que si l'on réalise $\pi$ et $\pi'$ dans le même espace vectoriel, on a $\pi'(\theta) = \pm \pi(\theta)$. En particulier, si $(\pi, V) \in \Rep(G^0)$ est une représentation lisse de $G^0$ et $\pi^+$ un prolongement à $G$, alors la restriction $\Tr_{\widetilde{G^0}}(\pi^+)$ à $\widetilde{G^0}$ du caractère de $\pi^+$ est déterminée par $\pi$ au signe près.

Dans l'autre direction, toute représentation irréductible $\theta$-stable de $G^0$ est prolongeable en une représentation de $G$. En effet, si $(\pi,V) \in \Irr(G^0)$ est une représentation irréductible $\theta$-stable de $G^0$ et $A \in \Hom_{G^0}(\pi, \pi \circ \theta)$ un isomorphisme quelconque, alors comme $A^2 \in \Hom_{G^0}(\pi,\pi)$, le lemme de Schur assure qu'il existe $\lambda \in \C$ non nul tel que $A^2 = \lambda \Id_V$. On se donne alors $\mu$ une racine carrée de $\lambda$, alors $\frac{A}{\mu}  \in \Hom_{G^0}(\pi, \pi \circ \theta)$ est d'ordre $2$ ce qui permet de prolonger $\pi$ à $G$.

Le même raisonnement s'applique évidemment en remplaçant $\theta$ par $\theta_0$, et comme les deux automorphismes sont conjugués, on vérifie qu'une représentation est $\theta$-stable si et seulement si elle est $\theta_0$-stable (en fait si $\pi \in \Rep(G^0)$, alors $\pi \circ \theta$ et $\pi \circ \theta_0$ sont toujours isomorphes via $\pi(J_0) : V \to V$). Notons enfin que pour $\pi \in \Rep(G^0)$ irréductible, on a toujours $\pi \circ \theta_0 \simeq \check{\pi}$ (ici on note $\check{\pi}$ la représentation contragrédiente, cf. \cite{Zelevinskii} theorem 7.3), donc une représentation irréductible est $\theta_0$-stable si et seulement si elle est autoduale.

\subsection{Conjugaison et conjugaison stable dans $G$} 
\label{sub:conjugaison_et_conjugaison_stable_dans_g_}

On note $\Ad : \alg{G} \to \Aut(\alg{G})$ l'action de $\alg{G}$ sur lui-même par conjugaison. Chaque composante irréductible de $\alg{G}$ est stable par cette action.

Si $g_0 \in \alg{G}^0$, alors $\widetilde{\alg{G}^0}$ est stable par $\Ad(g_0)$, et on note $\Ad_{\theta}(g_0) = f_{\theta} \circ \Ad(g_0)_{|\widetilde{\alg{G}^0}} \circ f_{\theta}^{-1}$ l'action sur $\alg{G}^0$ déduite via $f_{\theta}$, c'est-à-dire
\[
	\Ad_{\theta}(g_0)(h_0) = h_0 g_0 \theta(h_0)^{-1} = h_0 g_0 \, ^t h_0
\]
On appelle cette action, la conjugaison $\theta$-tordue. On définit la conjugaison $\theta_0$-tordue $\Ad_{\theta_0}$ de manière analogue en remplaçant $\theta$ par $\theta_0$. Une classification des classes de conjugaison semi-simples dans $\widetilde{G^0}$ est donnée dans \cite{WaldspurgerGLnTordu} I.3. 

Deux éléments de $x, y \in G = \alg{G}(F)$ semi-simples sont dits stablement conjugués s'il existe $g \in \alg{G}(\overline{F})$ tel que $x = gyg^{-1}$ et pour tout $\sigma \in \Gal(\overline{F}/F)$, on a $g^{-1} \sigma(g) \in Z(\alg{G}^0)^{\theta}\alg{G}_y^0$ (où $\Gal(\overline{F}/F)$ désigne le groupe de Galois de $\overline{F}/F$ et $\alg{G}_y^0$ la composante neutre du centralisateur de $y$ dans $\alg{G}^0$). La conjugaison dans $\alg{G}(F)$ entraine la conjugaison stable, qui entraine la conjugaison dans $\alg{G}(\overline{F})$.


\subsection{Classe de conjugaison $\theta$-tordue d'une matrice anti-symétrique} 
\label{sub:classe_de_conjugaison_tordue}

On définit $A_{2n}(F)$ comme l'ensemble de matrices carrées de taille $2n$ à coefficients dans $F$ inversibles antisymétriques.
\[
	A_{2n}(F) = \braces{\gamma \in \GL_{2n}(F), \, ^t \gamma = - \gamma, \forall i \in \ints{1,2n}, \gamma_{i i} = 0}
\]
Il est clair d'après la définition que $A_{2n}(F)$ est un fermé de $G^0$. Par ailleurs $G^0$ agit sur $A_{2n}(F)$ par $\theta$-conjugaison, c'est-à-dire
\[
	\text{Ad}_{\theta}(g).\gamma = g \gamma \, ^t g
\]
pour $\gamma \in A_{2n}(F)$ et $g \in G^0$. On définit $J_n \in A_{2n}(F)$ par
\[
	J_n = \pars{
	\begin{matrix}
		0 & I_n \\
		-I_n & 0
	\end{matrix}
	}
\]
Le groupe symplectique est défini comme le stabilisateur de $J_n$ sous cette action.

\[
	\Stab_{\text{Ad}_{\theta}}(J_n) = \Sp_{2n}(F)
\]
Et pour tous $\gamma \in A_{2n}(F)$ et $g \in G^0$,

\[
	\Stab_{\text{Ad}_{\theta}}(\text{Ad}_{\theta}(g).\gamma) = g \, \Stab_{\text{Ad}_{\theta}}(\gamma) \, g^{-1}
\]
Or cette action est transitive 
Cette propriété étant valable pour tout corps
, alors $A_n(F)$ est même la classe de conjugaison $\theta$-tordue stable de n'importe quelle matrice alternée. Par ailleurs tous les stabilisateurs sont donc conjugués entre eux, et en particulier conjugués à $\Sp_{2n}(F)$ (les ``centralisateurs tordus'' de Chenevier et Clozel sont de tels stabilisateurs). Par surcroît, pour tout $\gamma \in A_{2n}(F)$ on a donc une surjection

\begin{align*}
	.\gamma : G^0 & \longrightarrow A_{2n}(F) \\
	g & \longmapsto \text{Ad}_{\theta}(g).\gamma
\end{align*}
ce qui induit une bijection
\[
	G^0/\operatorname{Stab}_{\text{Ad}_{\theta}}(\gamma) \underset{.\gamma}{\sim} A_{2n}(F)
\]
En particulier, pour $\gamma = J_n$ on tire une bijection
\[
	\GL_{2n}(F)/\Sp_{2n}(F) \underset{.J_n}{\sim} A_{2n}(F)
\]
On choisit sur le quotient $\GL_{2n}(F)/\operatorname{Stab}_{\text{Ad}_{\theta}}(\gamma)$ une mesure invariante par translation à gauche (existe et est unique à constante près, voir par exemple \cite{Renard} II 3.9 dans le cas unimodulaire). On munit $A_{2n}(F)$ de la mesure déduite de celle sur $\GL_{2n}(F)/\operatorname{Stab}_{\text{Ad}_{\theta}}(\gamma)$ par bijection, cette mesure est donc invariante par l'action de $G^0$ par $\theta$-conjugaison, et c'est à constante près la seule qui possède cette propriété.

En résumé, on a montré dans cette partie que le centralisateur $\theta$-tordu d'une matrice de $A_{2n}(F)$ est conjugué au groupe symplectique, et que $\GL_{2n}$ quotienté par ce groupe (en particulier $\GL_{2n}/\Sp_{2n}$) est en bijection avec $A_{2n}(F)$.

Les orbites $\theta_0$-tordues se déduisent simplement des orbites $\theta$-tordues. En effet, si
$[g_0 \theta]^{G^0}$ et $[g_0 \theta_0]^{G^0}$ désignent les orbites $\theta$-tordue et $\theta_0$-tordue respectivement d'un élément $g_0 \in G^0$, on a la relation

\[
	[\gamma \theta_0]^{G^0} = [(\gamma J_0) \theta]^{G^0} . J_0^{-1}
\]
De même, le centralisateur $\theta$-tordu de $\gamma$ est le centralisateur $\theta_0$-tordu de $\gamma J_0$. Et les intégrales orbitales $\theta_0$-tordues se déduisent des intégrales orbitales $\theta$-tordues via
\[
	\OrbInt_{G^0 \theta_0}(\gamma,f) = \OrbInt_{G^0 \theta}(\gamma J_0,\lambda(J_0).f)
\]
Où $\OrbInt_{G^0 \theta_0}(\gamma,f)$ désigne l'intégrale orbitale de $f$ sur la classe de conjugaison $\theta_0$-tordue de $\gamma$ (même chose avec $\theta$), et $\lambda(J_0).f$ est la fonction $\lambda(J_0).f : g \mapsto f(g J_0^{-1})$. Moyennant ces adaptations mineures, on peut donc passer à loisir de $\theta$ à $\theta_0$ selon ce qui nous arrange (en particulier on peut reprendre les résultat de \cite{WaldspurgerGLnTordu}, \cite{Shahidi}, qui utilisent $\theta$, ou ceux de \cite{Chenevier}, qui utilisent $\theta_0$).


\section{Le problème de Chenevier et Clozel} 
\label{sec:conjecture_de_chenevier_clozel}

\subsection{Le problème} 
\label{sub:la_conjecture}

On se place toujours dans le même cadre, on pose
\[
	J_0 = \pars{
	\begin{matrix}
		&&&&1\\
		&&&-1&\\
		&&\revdots&&\\
		&1&&&\\
		-1&&&&\\
	\end{matrix}
	} = \pars{
	\begin{matrix}
		0&K_0\\
		-\,^t K_0&0
	\end{matrix}
	} \quad \textnormal{ et } \quad \gamma_0 = \pars{
	\begin{matrix}
		I_n&0\\
		0&-I_n
	\end{matrix}
	}
\]
On définit $\delta_0 = \gamma_0 \, \diag(1,-1, \ldots, -1)$. On vérifie que le produit $\delta_0 J_0$ est dans $A_{2n}(F)$
\[
	\delta_0 J_0 = \pars{
	\begin{matrix}
		&&&&&-1\\
		&&&&\revdots&\\
		&&&-1&&\\
		&&1&&&\\
		&\revdots&&&&\\
		1&&&&&\\
	\end{matrix}}
\]
L'intégrale orbitale $\theta_0$-tordue considérée par Chenevier et Clozel est pour $f \in \mathcal{C}_c^{\infty}(G^0)$
\[
	\text{TO}_{\delta_0}(f) = \OrbInt_{G^0}(\delta_0 \theta_0,f*\delta_{\theta_0}) = \int_{G^0/I_{\delta_0}} f(g \, \delta_0 \, \theta_0(g^{-1})) \, \D g = \OrbInt_{G^0}(\delta_0 J_0 \theta,\lambda(J_0).f*\delta_{\theta_0})
\]
où $I_{\delta_0}$ est le stabilisateur $\theta_0$-tordu de $\delta_0$, c'est-à-dire
\[
	I_{\delta_0} = \braces{g \in G^0, \, g \, \delta_0 \, \theta_0(g^{-1}) = \delta_0}^0
\]
on notera $\Irr(G^0)^{\theta_0}_{\textrm{temp}}$ l'ensemble des (classes d'isomorphisme de) représentations irréductibles tempérées et auto-duale de $G^0$. On dit qu'une représentation irréductible auto-duale de $G^0$ est symplectique si son paramètre de Langlands 	
est symplectique (c'est-à-dire préserve une forme bilinéaire alternée non dégénérée). Une représentation irréductible tempérée symplectique s'écrit sous la forme

\[
	\pi = \Ind_P^G(\alpha_1 \otimes \alpha_1^{\theta} \otimes \ldots \otimes \alpha_r \otimes \alpha_r^{\theta} \otimes \beta_{1} \otimes \ldots \otimes \beta_s)
\] 
Où les $\alpha_i$ et $\beta_j$ sont essentiellement de la série discrète et les représentations $\beta_j$ sont auto-duales symplectiques. Les éléments de l'orbite de $\pi$ qui sont encore symplectiques sont ceux obtenus en tordant les $\alpha_i$ par un caractère $\chi_i$ non-ramifié (et le $\alpha_i^{\theta}$ par l'inverse de $\chi_i$) et les $\beta_i$ par des caractères d'ordre $2$. En fait, les éléments de l'orbite de $\pi$ sont symplectiques dès lors qu'ils sont auto-duaux.

\begin{conj}[Chenevier, Clozel]
	Il existe une mesure positive $\D \pi$ sur le spectre tempéré auto-dual de $G^0$, à support dans les représentations tempérées symplectiques telle que l'on ait pour tout $f \in \Hecke(G^0)$,
	\[
		\OrbInt_{G^0}(\delta_0 \theta_0,f) = \int_{G^0/I_{\delta_0}} f(g \delta_0 \theta_0(g^{-1})) \, \D g = \int_{\Irr(G^0)^{\theta_0}_{\textrm{temp}}} \Tr[\pi(\theta_0) \, \pi(f)] \, \D \pi
	\]
	où  $\pi(\theta_0) \in \Hom_{G^0}(\pi, \pi \circ \theta_0)$ est la normalisation de Whittaker (cf. \cite{Chenevier} 4.1, et page 43).
\end{conj}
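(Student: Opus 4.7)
Le plan est le suivant : on transfère le problème via endoscopie au groupe $\alg{G'} = \SO_{2n+1}$, on applique la formule de Plancherel-Harish-Chandra à $G'$, puis on retransfère le développement spectral vers $\widetilde{G^0}$ en utilisant la correspondance de Langlands-Arthur.

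\textbf{Transfert géométrique.} Étant donnée $f \in \Hecke(G^0)$, on choisit un transfert endoscopique $f' \in \Hecke(G')$ (l'existence du transfert tordu étant assurée par les travaux de Waldspurger). La proposition \ref{prop:transfertIntegraleOrbitale} affirme que le transfert de l'intégrale orbitale en $1$ de $G'$ vers $\widetilde{G^0}$ est, à une constante multiplicative $\lambda > 0$ près, l'intégrale orbitale stable $\OrbInt_{\widetilde{G^0}}(\eta, \cdot)$ en un élément $\eta \in \widetilde{G^0}$ relié à $\delta_0 \theta_0$ par les formules de la section \ref{sub:classe_de_conjugaison_tordue}. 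On en déduit la relation $\OrbInt_{G^0}(\delta_0 \theta_0, f) = \frac{1}{\lambda} f'(1)$.

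\textbf{Plancherel et regroupement en $L$-paquets.} En appliquant la formule de Plancherel-Harish-Chandra à $G'$ évaluée en l'élément neutre, on obtient
\[
	f'(1) = \int_{\Irr_{\temp}(G')} \Tr(\pi'(f')) \, \D_{\mathrm{Pl}} \pi',
\]
où $\D_{\mathrm{Pl}}$ est la mesure de Plancherel. D'après le corollaire \ref{cor:plancherelLpaquets}, cette dernière est constante sur chaque $L$-paquet tempéré de $G'$, ce qui permet de découper cette intégrale en une intégrale sur l'espace des $L$-paquets tempérés $\Pi'$ de $G'$, d'intégrande $\sum_{\pi' \in \Pi'} \Tr(\pi'(f'))$ et de mesure $\D \Pi'$ induite.

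\textbf{Transfert spectral et conclusion.} Le théorème \ref{thm:arthur} d'Arthur fournit, pour tout $L$-paquet tempéré $\Pi'$ de $G'$, une unique représentation tempérée symplectique $\pi$ de $G^0$ telle que la distribution stable $\sum_{\pi' \in \Pi'} \Tr \pi'$ se transfère à $\widetilde{G^0}$ en $\Tr_{\widetilde{G^0}}(\pi^+)$, où $\pi^+$ est un prolongement convenable à $G$. L'application $\Pi' \mapsto \pi$ étant bijective entre $L$-paquets tempérés de $G'$ et représentations tempérées symplectiques auto-duales de $G^0$, on obtient par poussé en avant de $\D \Pi'$ une mesure positive $\D \pi$ supportée sur les représentations tempérées symplectiques, et
\[
	\OrbInt_{G^0}(\delta_0 \theta_0, f) = \int_{\Irr(G^0)^{\theta_0}_{\temp}} \Tr[\pi(\theta_0) \, \pi(f)] \, \D \pi.
\]

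\textbf{Difficulté principale.} L'obstacle principal est la cohérence des normalisations de $\pi^+$ : la restriction $\Tr_{\widetilde{G^0}}(\pi^+)$ n'est déterminée qu'au signe près, et il faut vérifier que le prolongement obtenu via le théorème d'Arthur coïncide (pour chaque $\pi$) avec celui associé à la normalisation de Whittaker $\pi(\theta_0)$ de Chenevier-Clozel, faute de quoi la positivité de la mesure $\D \pi$ serait compromise. Cette cohérence est toutefois forcée par la positivité intrinsèque de la distribution $\OrbInt_{G^0}(\delta_0 \theta_0, \cdot)$, de sorte que la difficulté se ramène à la comparaison de deux normalisations naturelles de l'entrelacement $\pi \circ \theta_0 \simeq \pi$ sur l'espace de la représentation, question essentiellement indépendante de $f$.
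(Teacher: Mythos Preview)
Your approach is essentially identical to the paper's: transfer the orbital integral to $G'=\SO_{2n+1}$ via Proposition~\ref{prop:transfertIntegraleOrbitale}, apply the Plancherel formula at $1$, regroup by $L$-paquets using Corollary~\ref{cor:plancherelLpaquets}, and transfer back spectrally via Arthur's Theorem~\ref{thm:arthur}. This is exactly what the paper does to establish Theorem~\ref{thm:conjecture}.

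One point deserves comment. You correctly flag the normalisation issue: Arthur's extension $\tau^+$ is not \emph{a priori} the Whittaker-normalised extension $\pi(\theta_0)$ appearing in the conjecture, and the two could differ by a sign depending on $\pi$. The paper does not resolve this; it simply states Theorem~\ref{thm:conjecture} with Arthur's normalisation $\tau^+$ and the measure $dm(\tau)$ induced from the Plancherel measure on $G'$, without claiming the full Chenevier--Clozel statement with Whittaker normalisation. Your final paragraph asserts that the coherence of signs is ``forcée par la positivité intrinsèque'' of the orbital integral, but this is not a valid argument: the orbital integral is a positive measure on the orbit, yet this does not by itself determine the sign of each individual spectral contribution $\Tr[\pi(\theta_0)\pi(f)]$, since these are not positive functionals of $f$. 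So your proposal proves exactly what the paper proves --- Theorem~\ref{thm:conjecture} --- and the passage to the precise Whittaker-normalised form of the conjecture remains, as in the paper, an open normalisation question rather than something settled by your positivity remark.
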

On peut reformuler en terme de $\theta$, si on pose $\delta = \delta J_0$, alors la conjecture équivaut à supposer l'existence de $\D \pi$ telle que
\[
	\int_{G^0/I_{\delta_0}} f(g \delta g^t) \, \D g = \int_{\Irr(G^0)^{\theta_0}_{\textrm{temp}}} \Tr[\pi(\theta) \, \pi(f)] \, \D \pi
\]
On peut également, remplacer $\delta_0$ par n'importe quelle matrice $\delta$ telle que $\delta J_0$ soit antisymétrique inversible. La conjecture concernant la positivité de la mesure vient probablement d'une analogie avec \cite{Chenevier} proposition 4.15. Grossièrement, on peut expliquer rapidement la conjecture par le calcul formel suivant : on peut appliquer la formule de Plancherel à l'intégrande dans $\OrbInt_{G^0\theta_0}(\delta_0,f)$, et après interversion des intégrales (la convergence n'est en fait pas garantie), on trouverait une intégrale sur $\Irr(G^0)^{\theta_0}_{\textrm{temp}}$ (on montre facilement que l'intégrande est nul si la représentation n'est pas auto-duale) avec pour intégrande les intégrales orbitales de coefficients matriciels (modulo le centre) considérées dans \cite{Shahidi} proposition 5.3. On sait d'après \cite{Henniart2} et \cite{Shahidi} prop 5.1, 5.3 que dans le cas ou la représentation est cuspidale, cette intégrale orbitale est non nulle si et seulement si la représentation est symplectique. Malheureusement, pour une représentation tempérée plus générale, on ne sait même pas si l'expression est convergente a priori. En substance c'est donc l'objet de cette conjecture. Nous donnons ce calcul en détail dans la section suivante à titre indicatif.

\section{Passage à l'algèbre de Lie} 
\label{sec:passage_a_l_algebre_de_lie}

On reproduit ici des définitions et résultats de \cite{HarishDeBacker} et \cite{WaldspurgerEndoscopieTordue} dont on se sert dans la suite.

\subsection{Conjugaison et orbites dans l'algèbre de Lie} 
\label{sub:conjugaison_et_orbites}
On suppose dans cette section $G$ connexe (sauf mention explicite du contraire). Soit $\g$ l'algèbre de Lie de $G$, c'est un $F$-espace vectoriel de dimension finie sur lequel $G$ agit par conjugaison. Si $x \in G$ et $X \in X$, on notera $x.X = \Ad(x)X$ cette action.
\begin{ex}
	Dans le cas où $G = \GL_n(F)$, alors $\g = \gl_n(F) = \mathcal{M}_n(F)$ l'algèbre des matrices carrés de taille $n$ munie du crochet de Lie $[X, Y] = XY - YX$, et l'action de $x \in G$ est donnée par $x.X = x X x^{-1}$. Dans d'un groupe réductif général, il existe des morphismes injectifs $G \hookrightarrow \GL_n(F)$, $\g \hookrightarrow \gl_n(F)$ qui préservent les structures afférentes.
	
	Dans le cas où $G = \GL_n(F) \rtimes \scal{\theta}$, alors on a aussi $\g = \gl_n(F)$ puisque l'algèbre de Lie ne dépend que de la composante neutre (alternativement, si $\theta$ est d'ordre $k$, on peut plonger $G$ dans $\GL_{nk}(F)$, et l'image du morphisme $\g \to \gl_n(F)$ qui s'en déduit s'identifie à $\gl_n(F)$). Si $\theta : \GL_n(F) \to \GL_n(F)$ est donné par $g \mapsto g^{-t}$, alors la conjugaison par $\theta$ est donnée pour $X \in \gl_n(F)$ par 
	\[
		\theta.X = -X^t
	\]
\end{ex}

De l'action de $G$ sur $\g$, on tire des actions sur l'espace $\hecke(\g)$ des fonctions localement constantes à support compact sur $\g$, et celui $\distr(\g) = \hecke(\g)^*$ des distributions sur $\g$. En effet, si $f \in \hecke(\g)$, et $x \in G$, on définit $f^x \in \hecke(\g)$ pour $X \in \g$ par
\[
	f^x(X) = f(x.X)
\]
Et si $T \in \distr(\g)$ est une distribution sur $\g$, on définit la distribution $^xT$ pour $f \in \hecke(\g)$ par
\[
	^xT(f) = T(f^x)
\]
On dit qu'une distribution $T \in \distr(\g)$ est invariante par $G$-conjugaison (ou $G$-invariante) si $^x T = T$ pour tout $x \in G$. On note $\distr(\g)^G$ l'espace vectoriel des distributions $G$-invariantes.

On appelle un $G$-domaine dans $\g$ un sous-ensemble de $\g$ invariant par $G$-conjugaison qui est à la fois ouvert et fermé. Pour deux sous-ensembles $S \subset G$ et $\omega \subset \g$, on pose
\[
	\omega^S = \bigcup_{s \in S} \textrm{Ad}(s) \omega
\]
Et on note $\distr(\g)^G(\omega)$ le sous-espace de $\distr(\g)^G$ des distributions $T \in \distr(\g)^G$ telles que $\Supp T \subset \overline{\omega^G}$ (la barre désigne la fermeture pour la topologie $p$-adique).
Si $X \in \g$, on notera $X^S$ pour $\set{X}^S$. 

Par une orbite (ou plus précisément une $G$-orbite) dans $\g$, on entend un ensemble de la forme $X^G$ pour $X \in \g$. Si $X \in \g$, on pose $C_{\g}(X) = \set{Y \in \g, \, [X,Y] = 0} = \ker \Ad(X)$ le centralisateur dans $\g$ du point $X$. C'est un sous-espace vectoriel de $\g$ dont la dimension ne dépend que de l'orbite $X^G$ de $X$ car si $g \in G$, alors $C_{\g}(\Ad(g) X) = \Ad(g^{-1}) \, C_{\g}(X)$. Cela autorise la définition suivante.

\begin{defn}[Dimension et rang d'une orbite]
	Soit $\Orb$ est une $G$-orbite dans $\g$, et $X \in \Orb$ un point quelconque de l'orbite. On définit

	\[
		d(\Orb) = \dim(\g/C_{\g}(X)) \qquad \textrm{ et } \qquad r(\Orb) = \dim C_{\g}(X)
	\]
	On dit que $d(\Orb)$ est la dimension de l'orbite, et $r(\Orb)$ est le rang de l'orbite. On note $\mathcal{N} \subset \g$ l'ensemble des éléments nilpotents de $\g$.
\end{defn}


\subsection{Conjugaison et orbites stables} 
\label{sub:conjugaison_et_orbites_stables}

Deux éléments $X,Y \in \g$ semi-simple réguliers sont dits stablement conjugués s'il existe $g \in G(\overline{F})$ tel que $Y = g.X$. Si $\omega \subset \g$, on note $\omega^{G, \st}$ l'ensemble des éléments stablement conjugués à un élément de $\omega$ (de manière générale, on notera par un exposant ou un indice $\st$ les objets afférents à la conjugaison stable). Si $X \in \g$, on notera $X^{G, \st}$ pour $\set{X}^{G, \st}$. Une orbite stable est une classe d'équivalence pour cette relation, c'est à dire un ensemble de la forme $X^{G, \st}$ pour $X \in \g$.

Quand $G = \GL_n(F)$, alors les notions de conjugaison stable et ordinaire coïncident. Mais ce n'est en général pas le cas pour d'autres groupes.
Comme la conjugaison ordinaire entraine la conjugaison stable, une orbite stable est une union d'orbites ordinaires. On peut montrer que cette union est finie.


\subsection{Voisinage d'un élément semi-simple} 
\label{sub:voisinage_d_un_element_semi_simple}

Soit $\gamma \in \g$ est un élément semi-simple de $\g$. On note $M = C_G(\gamma) = \set{g \in G, \, g.\gamma = \gamma}$ le centralisateur de $\gamma$ dans $G$, et $\lie{m} = C_{\g}(\gamma) = \set{Y \in \g, \, [\gamma,Y] = 0} = \ker \Ad(\gamma)$ son algèbre de Lie.

\begin{defn}
	On note $\Orb_{\g}(\gamma)$ l'ensemble des $G$-orbites $\Orb$ de $\g$ telles que $\gamma \in \overline{\Orb}$. Pour $d \in \N$, on note $\Orb_{\g}(\gamma)_d$ le sous-ensemble des orbites de degré $d$.
\end{defn}

\begin{lemme}[\cite{HarishDeBacker}]
	\label{lemme:voisinage_semisimple}
	L'ensemble $\Orb_{\g}(\gamma)$ est fini et égal à l'ensemble des orbites de la forme $\Orb = (\gamma + Y)^G$ où $Y$ est un élément nilpotent de $\lie{m}$.
	
	Plus précisément, si on note $\Orb_{\lie{m}}(0)$ l'ensemble des $M$-orbites nilpotentes de $\lie{m}$, alors l'application
	\begin{align*}
		\Orb_{\lie{m}}(0) & \longrightarrow \Orb_{\g}(\gamma) \\
		\xi \quad & \longmapsto (\gamma + \xi)^G
	\end{align*}
	est une bijection. Il existe $U$ un $M$-domaine tel que si $\Orb = (\gamma + \xi)^G$ est dans $\Orb_{\g}(\gamma)$, alors $\Orb \cap U = \gamma + \xi$. Ce qui fournit la réciproque. Et on a l'égalité
	\[
		r(\Orb) = r(\xi)
	\]
\end{lemme}
\begin{proof}
	Il s'agit d'une concaténation des lemmes 4.7, 4.8, 4.9 et corollaires 4.10 et 4.11 de \cite{HarishDeBacker}. Dans le cas particulier où $G = \GL_n(F)$, la preuve est élémentaire; donnons-la en détail.
	
	Puisque $\Orb_{\g}(\gamma)$ ne dépend que de la classe de conjugaison de $\gamma$, on peut supposer, quitte à conjuguer, que $\gamma$ est diagonale de la forme
	\[
		\gamma = \mat{\lambda_1 I_{n_1} & &\\ & \ddots & \\ &&\lambda_s I_{n_s}}
	\]
	avec les $\lambda_i$ deux à deux distincts. Dans ce cas, $M \simeq \GL_{n_1}(F) \times \ldots \times \GL_{n_s}(F)$ ($M$ est le sous-groupe de $\GL_n(F)$ des matrices diagonales par blocs de taille $n_1, \ldots, n_s$). De même on a $\lie{m} \simeq \gl_{n_1}(F) \times \ldots \times \gl_{n_s}(F)$.
	
	Soient $N, N' \in \lie{m}$, alors on vérifie que $(\gamma + N)^G = (\gamma + N')^G$ si et seulement si $N^M = (N')^M$. En effet, si $N^M = (N')^M$, alors $(\gamma + N)^M = (\gamma + N')^M$ (puisque $M = C_G(\gamma)$), et donc $(\gamma + N)^G = (\gamma + N')^G$. Réciproquement, on se donne $g \in G$ tel que $g.\gamma + g.N = \gamma + N'$. Par unicité de la décomposition de Jordan-Chevalley, on tire $g.\gamma = \gamma$ et $g.N = N'$. Or $g.\gamma = \gamma$  signifie précisément que $g \in M$, donc au final $N^M = (N')^M$. L'équivalence assure donc que la fonction $f : \xi \quad \longmapsto (\gamma + \xi)^G$, de $\Orb_{\lie{m}}(0)$ vers l'ensemble des $G$-orbites de $\g$ est bien définie et injective. Il reste encore à vérifier que l'on a $f(\Orb_{\lie{m}}(0)) = \Orb_{\g}(\gamma)$.
	
 	Commençons par l'inclusion $f(\Orb_{\lie{m}}(0)) \subset \Orb_{\g}(\gamma)$. Soit $N \in \lie{m}$, on veut montrer que $\Orb = (\gamma + N)^G$ est dans $\Orb_{\g}(\gamma)$. Quitte à conjuguer par un élément de $M$, on peut supposer que $N = (m_{i,j})$ est triangulaire supérieure, c'est-à-dire $m_{i,j} = 0$ si $j \le i$. Si $d = \diag(x, x^2, \ldots, x^n)$, alors $d \in M$, donc $\Orb = (\gamma + d.N)^G$. Or $d.N = (x^{i-j} m_{i,j})$, donc en choisissant $x$ suffisamment grand, on peut rendre $d.N$ arbitrairement proche de $0$, ce qui prouve que $\Orb \in \Orb_{\g}(\gamma)$.
	
	Pour l'autre inclusion, soit $\Orb \in \Orb_{\g}(\gamma)$. Soit $X \in \Orb$ un point de l'orbite, on écrit sa décomposition de Jordan-Chevalley $X = D + N$ avec $D$ semi-simple et $N$ nilpotent qui commutent. Pour tout $Y \in \Orb$, on a égalité des polynômes caractéristiques $\chi_Y = \chi_X = \chi_D$, et par continuité de la fonction $Y \mapsto \chi_Y$, c'est encore vrai pour tout $Y \in \overline{\Orb}$. En particulier, on a $\chi_{\gamma} = \chi_D$. Quitte à changer $X$ en un conjugué, on peut donc supposer $D = \gamma$, et l'hypothèse que $D$ et $N$ commutent entraine alors que $N \in \lie{m}$.
	
	Pour montrer la finitude de $\Orb_{\g}(\gamma)$, montrons celle de $\Orb_{\lie{m}}(0)$. Avec les notations évidentes, on vérifie que $\Orb_{\lie{m}}(0) = \Orb_{\gl_{n_1}}(0) \times \ldots \times \Orb_{\gl_{n_s}}(0)$. On est donc ramené à montrer que $\Orb_{\gl_n}(0)$ est fini. Or deux matrices nilpotentes de $\gl_n(F)$ sont conjuguées (dans $\GL_n(\overline{F})$ et donc dans $\GL_n(F)$) si et seulement si elles ont à l'ordre près des blocs de la même forme normale de Jordan. Les éléments de $\Orb_{\gl_n}(0)$ sont donc paramétrés par les partitions de l'entier $n$, et en particulier c'est un ensemble fini.
	
	Enfin, pour la dernière partie de la preuve (l'existence du $M$-domaine $U$ et l'égalité des dimensions qui en découle), on renvoie à \cite{HarishDeBacker} lemme 4.10.
\end{proof}

Si $\gamma$ est un élément semi-simple de $G$, on peut formuler un lemme analogue décrivant l'ensemble $\Orb_G(\gamma)$ des classes de conjugaison $\Orb$ dans $G$ tels que $\gamma \in \overline{\Orb}$, ces orbites sont de la forme $(\gamma U)^G$ avec $U$ une classe de $M$-conjugaison d'éléments unipotents où $M = C_G(\gamma)$ (la preuve est analogue dans le cas $G = \GL_n$).


\subsection{Intégrales orbitales} 
\label{sub:integrales_orbitales_lie}

Soit $\Orb$ une $G$-orbite, et $X \in \Orb$ un point de l'orbite. Le centralisateur $C_G(X)$ de $X$ dans $G$ est unimodulaire, donc on peut munir $C/C_G(X)$ d'une mesure positive invariante $\D x^*$ unique à proportionnalité près\footnote{On va choisir une normalisation dans la suite.}.
On définit une distribution $\OrbInt_{\g}(X, .) = \OrbInt_{\g}(\Orb, .)$, appelée intégrale orbitale en $\Orb$ (ou en $X$), pour $f \in \hecke(\g)$ par

\[
	\OrbInt_{\g}(\Orb, f) = \int_{G/C_G(X)} f(x.X) \, \D x^*
\]
C'est bien défini car l'intégrale ci-dessus est convergente d'après \cite{Rao}. Il est clair que la distribution $\OrbInt_{\g}(\Orb, .)$ est invariante par conjugaison, c'est-à-dire $\OrbInt_{\g}(\Orb, .) \in \distr(\g)^G$.


\subsection{Homogénéité des intégrales orbitales nilpotentes} 
\label{sub:homogeneite_des_integrales_orbitales}

Pour $t \in F^*$ et $f \in \hecke(\g)$, on pose $f_t(X) = f(t X)$. Si $T  \in \distr(\g)$ est une distribution, on définit la distribution $\rho(t)T$ pour $f \in \hecke(\g)$ par
\[
	[\rho(t)T](f) = T(f_t)
\]
On a
\[
	[\rho(t) \OrbInt_{\g}(\Orb, .)](f) = \int_{G/C_G(X)} f(x.(tX)) \, \D x^*
\]
Et par unicité de la mesure invariante sur $C/C_G(tX) = C/C_G(X)$, il existe une constante $c_{\Orb}(t) > 0$ telle que
\[
	\rho(t) \OrbInt_{\g}(\Orb, .) = c_{\Orb}(t) \OrbInt(t \Orb, .)
\]
\begin{lemme}[\cite{HarishDeBacker} lemme 5.2, \cite{WaldspurgerHomogeneite} 5.1]
	\label{lemme:homogoneite_integrales_orbitales}
	Il existe une normalisation des mesures invariantes telle que pour tout $t \in F^*$ et toute orbite nilpotente $\Orb \in \Orb(0)$, on ait les relations
	\[
		\rho(t) \OrbInt_{\g}(\Orb, .) = |t|^{-\frac{d(\Orb)}{2}} \OrbInt(t \Orb, .) \qquad \textrm{ et } \qquad \rho(t^2) \OrbInt_{\g}(\Orb, .) = |t|^{-d(\Orb)} \OrbInt_{\g}(\Orb, .)
	\]
\end{lemme}
\begin{proof}
	Voir \cite{HarishDeBacker} lemme 5.2 (noter que notre définition de $\rho(t)$ diffère de \cite{HarishDeBacker} d'où la différence de signe), \cite{WaldspurgerHomogeneite} 5.1 (dans lequel le signe est identique au nôtre). 
\end{proof}
Dans la suite on fixe une telle normalisation (le choix n'est pas unique). On note $\mathcal{N} \subset \g$ l'ensemble des matrices nilpotentes de $\g$ et $\distr(\g)^G(\mathcal{N})$ l'espaces des distributions supportées dans $\mathcal{N}$. Comme $t\mathcal{N} = \mathcal{N}$ pour tout $t \in F^*$, l'espace $\distr(\g)^G(\mathcal{N})$ est stable par $\rho$. Pour $d \in \N$, on pose
\[
	\distr(\g)^G(\mathcal{N})_d = \set{T \in \distr(\g)^G(\mathcal{N}), \, \forall t \in F^*, \, \rho(t^2) T = |t|^{-d}T} 
\]
On dira qu'un élément de $\distr(\g)^G(\mathcal{N})_d$ est homogène de degré $d$. D'après le lemme \ref{lemme:homogoneite_integrales_orbitales}, si $\Orb \in \Orb(0)$ est une orbite nilpotente, alors $\OrbInt_{\g}(\Orb, .)$ est homogène de degré $d(\Orb)$. Le lemme suivant assure que le degré d'homogénéité fournit une graduation de l'espace vectoriel $\distr(\g)^G(\mathcal{N})$.
\begin{lemme}
	\label{lemme:homogeneite_independance}
	\
	\begin{enumerate}
		\item La famille des distributions $(\OrbInt_{\g}(\Orb, .))_{\Orb \in \Orb(0)}$ forme une base de l'espace $\distr(\g)^G(\mathcal{N})$.
		\item On a la décomposition
		\[
			\distr(\g)^G(\mathcal{N}) = \bigoplus_{d \in \N} \distr(\g)^G(\mathcal{N})_d
		\]
		\item Pour $d \in \N$, la famille $(\OrbInt_{\g}(\Orb, .))_{\Orb \in \Orb(0)_d}$ forme une base de $\distr(\g)^G(\mathcal{N})_d$.
	\end{enumerate}
\end{lemme}
\begin{proof}
	\
	\begin{enumerate}
		\item Voir \cite{HarishDeBacker} lemme 5.1.
		\item D'après le point précédent, toute distribution dans $\distr(\g)^G(\mathcal{N})_d$ est comibinaison linéaire d'intégrales orbitales nilpotentes, en particulier c'est une combinaison linéaire de distributions homogènes. Montrons que la somme est directe. Soit donc $(T_d)_{d \in \N}$ une famille à support fini de distributions telles que $T_d \in \distr(\g)^G(\mathcal{N})_d$ pour tout $d \in \N$ et $\sum_{d \in \N} T_d = 0$. Pour tout $t \in F^*$, on a donc 
		\[
			0 = \rho(t^2)\pars{\sum_{d \in \N} T_d} = \sum_{d \in \N} |t|^{-d} T_d
		\]
		Par indépendance des caractères $(t \mapsto |t|^{-d})_{d \in \N}$, on déduit que les distributions $T_d$ sont toutes nulles.
		\item Ce dernier point est une conséquence des deux précédents.
	\end{enumerate}
\end{proof}
En particulier, le lemme entraine qu'une distribution $T \in \distr(\g)^G(\mathcal{N})$ homogène de degré $0$ est proportionnelle à l'intégrale orbitale $\OrbInt_{\g}(0, \cdot)$ en $0$. Terminons cette section par une petit lemme technique qui permet d'affaiblir la condition d'homogénéité.

\begin{lemme}
	\label{lemme:homogeniete_sous-groupe}
	Fixons $t_0 \in F^*$ un élément de $F$ tel que $|t_0| \ne 1$. Alors une distribution $T \in \distr(\g)^G(\mathcal{N})$ est homogène de degré $d$ si et seulement si on a
	\[
		\rho(t_0^{2}) T = |t_0|^{-d} T
	\]
\end{lemme}
\begin{proof}
	Le sens direct est clair. Supposons réciproquement $\rho(t_0^{2}) T = |t_0|^{-d} T$. Alors pour tout $k \in \Z$ on vérifie que $\rho(t_0^{2k}) T = \rho(t_0^{2})^k T = |t_0|^{-dk} T$. On décompose $T = \sum_{i} T_{i}$ où $T_i$ est homogène de degré $i$ (on utilise le lemme \ref{lemme:homogeneite_independance}), on a alors pour tout $k \in \Z$,
	\[
		 T = |t_0|^{dk} \rho(t_0^{2k}) T  = \sum_{i} |t_0|^{(d-i)k} T_i
	\]
	Or comme $|t_0| \ne 1$, les caractères $k \to |t_0|^{(d-i)k}$ de $\Z$ sont distincts donc indépendants. Et donc il reste $T = T_d$, ce qui conclut.
\end{proof}


\subsection{Distributions et exponentielle} 
\label{sub:distributions_exponentielle}

On rappelle que l'exponentielle est une application continue et $G$-equivariante $\exp : \lie{v} \to V$ où $\lie{v} \subset \g$ et $V \subset G$ sont des ouverts stablement invariants de $\g$ et $G$ respectivement. De manière duale, on dispose d'applications linéaires
\[
	\exp^* : \hecke(V) \to \hecke(\lie{v}) \qquad \qquad \exp^{**} : \distr(\lie{v}) \to \distr(V)
\]
définies par $\exp^*(f) = f \circ \exp$ et $\exp^{**}(D) = D \circ \exp^*$. On peut prolonger l'application $\exp^*$ sur $\hecke(G)$ tout entier ($\exp^*f$ ne dépend en fait que de $f_{|V}$), et la faire arriver dans $\hecke(\g)$ en prolongeant par $0$ hors de $\lie{v}$.
\[
	\exp^* : \hecke(G) \to \hecke(\g)
\]
De même, $\exp^{**}$ se prolonge à $\distr(\g)$ ($\exp^{**} d$ ne dépend en fait que de $d_{|\lie{v}}$) et on peut identifier $\distr(V)$ au sous-espace de $\distr(G)$ des distributions à support dans $V$.
\[
	\exp^{**} : \distr(\g) \to \distr(G)
\]
Comme $\exp$ est $G$-equivariante (i.e. vérifie la relation $\exp(g.X) = g.\exp(X)$) alors $\exp^*$ et $\exp^{**}$ le sont aussi. Donc une distribution $G$-invariante $D \in \distr(\g)^G$ à support dans $\lie{v}$ est amenée par $\exp^{**}$ sur une distribution $G$-invariante à support dans $V$. En particulier on dispose du lemme suivant.
\begin{lemme}
	\label{lemme:IntegraleOrbitaleExponentielle}
	Soit $X \in \g$. Si $X \in \lie{v}$, alors on a la relation
	\[
		\exp^{**}(\OrbInt_{\g}(X,.)) = \OrbInt_{G}(\exp(X),.)
	\]
	Sinon $\exp^{**}(\OrbInt_{\g}(X,.)) = 0$.
\end{lemme}
\begin{proof}
	Soit $f \in \hecke(G)$. Si $X \in \lie{v}$, alors
	\begin{align*}
		\OrbInt_{\g}(X,\exp^{*} f) &= \int_{G / C_G(X)} (\exp^*f)(g.X) \, \D x^* \\
		&= \int_{G / C_G(X)} f(\exp(g.X)) \, \D x^* \\
		&= \int_{G / C_G(X)} f(g \exp(X) g^{-1}) \, \D x^* \\
		&= \int_{G / C_G(\exp(X))} f(g \exp(X) g^{-1}) \, \D x^* \\
		&= \OrbInt_{G}(\exp(X),f)
	\end{align*}
	On a l'égalité $C_G(X) = C_G(\exp(X))$ vient de l'injectivité de l'exponentielle (l'équivariance donnant l'inclusion $C_G(X) \subset C_G(\exp(X))$). Si $X$ n'est pas dans $\lie{v}$, alors les support de $\exp^*f$ est nulle sur toute la classe de conjugaison de $X$, donc $\exp^{**}(\OrbInt_{\g}(X,.)) = 0$.
\end{proof}
\begin{lemme}
	\label{lemme:IntegraleOrbitaleStableExponentielle}
	Soit $X \in \g$. Si $X \in \lie{v}$, alors on a la relation
	\[
		\exp^{**}(\OrbInt_{\g}^{\st}(X,.)) = \OrbInt_{G}^{\st}(\exp(X),.)
	\]
	Sinon $\exp^{**}(\OrbInt_{\g}^{\st}(X,.)) = 0$.
\end{lemme}
\begin{proof}
	Il suffit de remarquer que $\lie{v}$ est stable par conjugaison stable et que $X, X' \in \lie{v}$ sont stablement conjugués si et seulement si $\exp(X)$ et $\exp(X')$ le sont.
\end{proof}

On définit de manière analogue $\ln^* : \hecke(\lie{v}) \to \hecke(V)$ et $\ln^{**} : \distr(V) \to \distr(\lie{v})$. Ces applications sont inverses de $\exp^*$ et $\exp^{**}$ respectivement. On a donc pour tout $X \in \lie{v}$
\[
	\ln^{**} \OrbInt_{G}(\exp(X),.) = \OrbInt_{\g}(X,.)
\]

%
%

\subsection{Descente des intégrales orbitales aux algèbres de Lie} 
\label{sub:descente_integrales_orbitales}

Le lemme \ref{lemme:IntegraleOrbitaleStableExponentielle} permet de ramener les intégrales orbitales dans $G$ au voisinage de l'identité en des intégrales orbitales sur l'algèbre de Lie. Si $\gamma \in G$ est un élément semi-simple, on note $G_{\gamma} = C_G(\gamma)^0$ la composante connexe du centralisateur de $\gamma$ dans $G$ et $\g_{\gamma}$ son algèbre de Lie. On considère l'application $\exp_{\gamma} : X \mapsto \exp(X) \gamma$, elle est $G_{\gamma}$-equivariante.

On reproduit ici une lemme de \cite{WaldspurgerEndoscopieTordue} permettant de descendre les intégrales orbitales tordues sur le groupe $G$ au voisinage d'un point $\gamma$ quelconque à celle sur l'algèbre de Lie $\g_{\gamma}$ via l'exponentielle.

\begin{lemme}[cf \cite{WaldspurgerEndoscopieTordue} 2.4]
	\label{lemme:descente}
	Il existe un voisinage $\lie{U}$ de $0$ dans $\g_{\gamma}$ tel que
	\begin{enumerate}
		\item Pour tout $f \in \hecke\pars{\widetilde{G^0}}$, il existe $\varphi \in \hecke\pars{\g}$ tel que pour tout $X \in \lie{U}$, on a
		\[
			\OrbInt_{\widetilde{G^0}}(\exp(X) \gamma, f) = \OrbInt_{\g_{\gamma}}(X, \varphi)
		\]
		 \item L'ouvert $\lie{U} \subset \g_{\gamma}$ est un $G_{\gamma}$-domaine qui vérifie
			\begin{enumerate}
				\item Pour $X \in \lie{U}$, $X$ est semi-simple régulier dans $\g_{\gamma}$ si et seulement si $\exp(X) \gamma$ l'est dans $\widetilde{G^0}$.
				\item Si $X,Y \in \lie{U}$ et $x \in G$ sont tels que $x.[\exp(X)\gamma] = \exp(Y)\gamma$, alors $x \in C_G(\gamma)$. A fortiori, pour $X \in \lie{U}$, on a l'inclusion $C_G(\exp(X) \gamma)^0 \subset C_G(\gamma)^0$
			\end{enumerate}
	\end{enumerate}
	 
\end{lemme}
\begin{proof}
	Il s'agit du résultat de \cite{WaldspurgerEndoscopieTordue} 2.4.
\end{proof}

Notons $\adherenceIntOrb(\g_{\gamma})$ le sous-espace des fonctions $f \in \hecke(\g_{\gamma})$ telles que $\OrbInt_{\g_{\gamma}}(X,f) = 0$ pour tout $X \in \g_{\gamma}$ semi-simple régulier.
Le choix de $\varphi \in \hecke(\g_{\gamma})$ dans le lemme n'est pas unique a priori, mais sa classe modulo $\adherenceIntOrb(\g_{\gamma})$ l'est. On notera $\exp_{\gamma, G}^{*} f \in \hecke(\g_{\gamma})/\adherenceIntOrb(\g_{\gamma})$ cette classe. Si $d \in \distr^{G_{\gamma}}(\g_{\gamma})$ est une distribution invariante, elle est nulle sur $\adherenceIntOrb(\g_{\gamma})$. On peut alors définir $\exp_{\gamma, G}^{**} d \in \distr(G)$ par $\exp_{\gamma, G}^{**} d(f) = d(\exp_{\gamma, G}^{*} f)$ (il n'y a pas d'ambiguité puisque $d$ est nulle sur $\adherenceIntOrb(\g_{\gamma})$). Par construction, on a $\exp_{\gamma, G}^{**} \OrbInt_{\g_{\gamma}}(X, \cdot) = \OrbInt_G(\exp(X)\gamma, \cdot)$ pour $X$ dans un voisinage de $0$ comme dans le lemme \ref{lemme:descente}.

%
%
%




\section{Endoscopie pour les algèbres de Lie} 
\label{sec:endoscopie_pour_les_algebres_de_lie}

\subsection{Intégrales orbitales} 
\label{sub:integrales_orbitales}

Si $X \in g$, on définit son orbite, notée $X^G$ par

\[
	X^G = \set{X' \in g, \, \exists x \in G, \, \Ad(x).X = X'}
\]
On munit $X^G$ d'une mesure invariante par adjonction. L'intégrale orbitale associée est la distribution $\OrbInt_{\g}(X,.)$ définie pour $f \in \hecke(\g)$ par
\[
	\OrbInt_{\g}(X,f) = \int_{X^G} f(X') \, \D X'
\]
On vérifie que $\OrbInt_{\g}(X,.) \in \distr^G(\g)$.

On note $g_{\reg}$ le sous-ensemble de $g$ formé par les éléments semi-simples réguliers. Pour $X \in \alg{\g}_{\reg}$, on définit son orbite stable par
\[
	X^{G, \st} = \set{X' \in g, \, \exists x \in \alg{\g}(\overline{F}), \, \Ad(x).X = X'}
\]
L'orbite stable est une union disjointe d'orbites ordinaires, fixons $\mathcal{E}(X) \subset g$ un ensemble de représentants des orbites contenues dans $X^{G, \st}$. On a la décomposition en union disjointe
\[
	X^{G, \st} = \bigsqcup_{X' \in \mathcal{E}(X)} (X')^{G}
\]
On sait que $\mathcal{E}(X)$ est un ensemble fini via la cohomologie galoisienne (voir \cite{KottwitzRationalConjugacyClasses} pour plus de détails).
On définit l'intégrale orbitale stable $\OrbInt_{\g}^{\st}(X,.)$ par
\[
	\OrbInt_{\g}^{\st}(X,.) = \sum_{X' \in \mathcal{E}(X)} \OrbInt_{\g}(X',.)
\]
moyennant un choix convenable de normalisation pour les mesures. On note $\distr(\g)^{\st}$ la clôture dans $\distr(\g)$, pour la topologie faible, du sous-espace engendré par $\set{\OrbInt_{\g}^{\st}(X,.), \, X \in g_{\reg}}$. C'est-à-dire que $\distr(\g)^{\st}$ est le sous-espace des distribution $D \in \distr(\g)$ qui sont nulles sur toutes les fonctions $f \in \hecke(\g)$ telles $\OrbInt_{G}^{\st}(X,f) = 0$ pour tout $X \in g_{\reg}$.


\subsection{Facteur de transfert} 
\label{sub:facteur_de_transfert}

On se donne $\textbf{\textit{H}}$ un groupe apparaissant dans une donnée endoscopique de $\textbf{\textit{G}}$ (
dans la suite on sera intéressé par le cas $\alg{\widetilde{G^0}} = \GL_{2n} \theta \rangle$ et $\alg{H} = \SO_{2n+1}$). Un tel groupe $\textbf{\textit{H}}$ est algébrique, défini sur $F$, réductif, connexe et quasi déployé. Sa dimension est inférieure ou égale à celle de $\textbf{\textit{G}}$, mais son rang est égal à celui de $\textbf{\textit{G}}$. 
Langlands et Shelstad ont défini :
\begin{enumerate}
	\item Un sous-ensemble $\h_{G-\reg}$ de $\h_{\reg}$ qui est un ouvert de Zariski dense dans $\h$.
	\item Une correspondance entre orbites stables dans $\h_{G-\reg}$ et celles dans $\g_{\reg}$.
	\item Une application (définie à un scalaire près), appelée facteur de transfert,
	\[
		\Delta_{\g,\h} : \h_{G-\reg} \times \g_{\reg} \longrightarrow \C
	\]
	telle que pour $(Y,X) \in \h_{G-\reg} \times \g_{\reg}$, on ait :
	\begin{enumerate}
		\item Si $\Delta_{\g,\h}(Y,X) \ne 0$, alors $\Orb^{\st}(Y)$ et $\Orb^{\st}(X)$ se correspondent.
		\item Si $Y' \in \Orb^{\st}(Y)$ et $X' \in \Orb^{\st}(X)$, alors
		\[
			\Delta_{\g,\h}(Y,X) = \Delta_{\g,\h}(Y',X')
		\]
	\end{enumerate}
\end{enumerate}
Si $(Y,X) \in \h_{G-\reg} \times \g_{\reg}$ sont tels que $\Orb^{\st}(Y)$ et $\Orb^{\st}(X)$ se correspondent, on pose
\[
	\OrbInt_{\g,\h}(Y,.) = \sum_{X' \in \mathcal{E}(X)} \Delta_{\g,\h}(Y,X') \, \OrbInt_{\g}(X', .)
\]
On remarque que comme $\OrbInt_{\g,\h}(Y,X') = 0$ si $\Orb^{\st}(Y)$ et $\Orb^{\st}(X)$ ne se correspondent pas, on peut d'ailleurs réécrire si on veut
\[
	\OrbInt_{\g,\h}(Y,.) = \sum_{X \in \g_{\reg}/\text{conj}} \Delta_{\g,\h}(Y,X) \, \OrbInt_{\g}(X, .)
\]
où $X$ parcourt l'ensemble des classes de conjugaison de $\g_{\reg}$.

\subsection{Transfert des fonctions et distributions} 
\label{sub:transfert_des_fonctions_et_distributions}

\begin{defn}[Transfert des fonctions]
	\label{def:transfertLie}
	Soient $f \in \hecke(\g)$, $f^H \in \hecke(\h)$. On dit que $f^H$ est un transfert de $f$ si et seulement si pour tout $Y \in \h_{G-\reg}$, on a l'égalité
	\[
		\OrbInt_{\g,\h}(Y,f) = \OrbInt_{\h}^{\st}(Y,f^H)
	\]
	On dit alors que les intégrales orbitales (semi-simples) de $f$ et de $f^H$ se correspondent.
\end{defn}

On sait que le transfert des fonctions existe (il s'agit d'un résultat difficile, voir notamment \cite{NgoLemmeFondamental}, \cite{WaldspurgerCaracteristique} et \cite{WaldspurgerTransfert}). Remarquons que le transfert n'est pas unique. En fait, si l'on note $\transfertZero(\h)$ le sous-espace de $\hecke(\h)$ des fonctions $f^H$ telles que $\OrbInt_{\h}^{\st}(Y,f^H) = 0$ pour tout $Y \in h_{G-\reg}$ (c'est-à-dire $f^H$ est un transfert de la fonction nulle), alors l'ensemble des transferts d'une fonction $f \in \hecke(\g)$ est une classe modulo $\transfertZero(\h)$, ce qui définit une fonction

\[
	\operatorname{Transfert} : \hecke(\g) \longrightarrow \hecke(\h)/\transfertZero(\h)
\]

De manière duale au transfert sur les fonctions, on peut définir une notion de transfert sur les distributions de $\h$ vers $\g$.

\begin{defn}[Transfert des distributions]
	\label{def:transfertDistributionsLie}
	soient $D^H \in \distr(\h)^H$ et $D \in \distr(\g)^G$. On dit que $D$ est un transfert de $D^H$ si et seulement si pour toutes $f \in \hecke$ $f^H \in \hecke(\h)$ telles que $f^H$ soit un transfert de $f$, on a l'égalité
	\[
		D(f) = D^H(f^H)
	\]
\end{defn}

Si la distribution $D^H \in \distr(\h)$ admet un transfert, alors elle est nulle sur $\transfertZero(\h)$, donc est stablement invariante.

D'après la définition, si $Y \in h_{G-\reg}$, le transfert de la distribution $\OrbInt_{\h}^{\st}(Y, .) \in \distr(\h)^{H, \st}$ est $\OrbInt_{G,H}(Y,.) \in \distr(\g)^G$. Le transfert des intégrales orbitales (stables) nilpotentes est plus difficile à décrire a priori (cf. \cite{WaldspurgerEndoscopie}).


\subsection{Endoscopie non standard} 
\label{sub:endoscopie_non_standard}

On peut définir une notion analogue de transfert sur les algèbres de Lie pour certaines paires de groupes qui ne sont pas des données endoscopiques au sens ordinaire, on parle d'endoscopie non standard. Donnons une brève description de ce qu'est un triplet endoscopique non standard $(G_1, G_2, j_*)$ (on se réfèrera à \cite{WaldspurgerEndoscopieTordue} 1.7 pour une définition précise et une classification de ces triplets). Dans un tel triplet, $G_1$, $G_2$ sont des groupes réductifs connexes et quasi-déployés sur $F$. Pour $i = 1,2$, fixons un tore maximal $T_i$ défini sur $F$ d'un sous-groupe de Borel de $G_i$ défini sur $F$. Notons $\Omega_i$ le groupe de Weyl de $G_i$ relativement à $T_i$. Enfin $j$ est la donnée d'un isomorphisme $j_* : X_*(T_1) \otimes \Q \to X_*(T_2) \otimes \Q$ équivariant pour l'action de $\Gal(\overline{F}/F)$ (où désigne le groupe des cocaractères de $T_i$), et d'un isomorphisme $j_{\Omega} : \Omega_1 \to \Omega_2$ tel que pour tous $t_1 \in T_1$, $\omega_1 \in \Omega_1$, on ait

\[
	j_{\Omega}(\omega_1) \circ j_* = j_* \circ \omega_1
\]
Ces données vérifiant certaines conditions précisées dans \cite{WaldspurgerEndoscopieTordue} 1.7.
En passant aux algèbre de Lie,
l'application $j_*$ induit (en tensorisant par $\overline{F}$) un isomorphisme entre les algèbres de Lie $\mathfrak{t}_1(F)$ et $\mathfrak{t_2}(F)$
des tores compatibles à l'action des groupes de Weyl,
et donc une bijection 
\[
	(\mathfrak{t}_1(F)/\Omega_1)^{\Gal(\overline{F}/F)} \sim (\mathfrak{t}_2(F)/\Omega_2)^{\Gal(\overline{F}/F)}
\]
donc entre les classes de conjugaison stables semi-simples dans $\g_1(F)$ et $\g_2(F)$. On peut alors définir une notion de transfert entre les fonctions de $\hecke(\g_1(F))$ et $\hecke(\g_2(F))$ (le facteur de transfert vaut $1$) puis entre les distributions stables sur $\g_2$ et $\g_1$. Toutefois, à la différence de l'endoscopie standard, il n'y a pas de transfert au niveau des groupes $G_1$ et $G_2$.
Un exemple important
d'endoscopie non standard est le cas dans lequel $G_1 = \Sp(2n)$ est le groupe symplectique et $G_2 = \SO(2n+1)$ est le groupe spécial orthogonal.

La correspondance entre classes de conjugaisons stables semi-simples régulières de $\lie{sp}_{2n}(F)$ et $\so_{2n+1}(F)$ se décrit explicitement de la manière suivante. Si $X \in \lie{sp}_{2n}(F)$ est semi-simple régulier, on note $\Lambda(X)$ l'ensemble de ses valeurs propres. De même, si $Y \in \so_{2n+1}(F)$ est semi-simple régulier, on note $\Lambda(Y)$ l'ensemble de ses valeurs propres. Alors les classes de conjugaisons stables de $X \in \lie{sp}_{2n}(F)$ et $Y \in \so_{2n+1}(F)$ se correspondent si et seulement si $\Lambda(Y) = \Lambda(X) \cup \set{0}$.


\begin{defn}[Transfert des fonctions]
	\label{def:transfertNonStandardLie}
	Soient $f_1 \in \hecke(\g_1)$, $f_2 \in \hecke(\g_2)$. On dit que $f_2$ est un transfert de $f_1$ si pour tout $X_1 \in \g_1$ et $X_2 \in \g_2$ dont les classes de conjugaison stables se correspondent, on a l'égalité
	\[
		\OrbInt_{\g_1}^{\st}(X_1,f_1) = \OrbInt_{\g_2}^{\st}(X_2,f_2)
	\]
\end{defn}
De même que précédemment, le transfert définit une application
\[
	\operatorname{Transfert} : \hecke(\g_1)/\transfertZero(\g_1) \longrightarrow \hecke(\g_2)/\transfertZero(\g_2)
\]
L'existence du transfert endoscopique non standard a elle aussi été établie grâce aux travaux de Ngo Bao Chau sur le lemme fondamental (cf. \cite{NgoLemmeFondamental}) et de Waldspurger qui montrent que le lemme fondamental non standard implique le transfert non standard (cf. \cite{WaldspurgerEndoscopieTordue} ou \cite{WaldspurgerGLnTordu}).





\section{Homogénéité et transfert} 
\label{sec:homogeneite_et_transfert}

\subsection{Homogénéité et endoscopie standard} 
\label{sub:homogeneite_endoscopie}

On suppose que $G$ est quasi-déployé sur $F$. On définit un caractère $\chi_{G,H} : F^{\times} \to \C^*$ d'ordre $\le 2$ du groupe multiplicatif $F^{\times}$ de la manière suivante. Soit $T_G \subset G$ un tore de $G$ défini sur $F$ contenu dans un sous-groupe de Borel de $G$ défini sur $F$ et $T_H \subset H$ le tore de $H$ correspondant ($G$ et $H$ ont même rang). Désignons par $\sigma_G$ et $\sigma_H$ les actions de $\sigma \in \Gal(\overline{F}/F)$ respectivement sur $T_G$ et $T_H$ (que l'on identifie). Ces deux actions diffèrent d'un élément du groupe de Weyl de $G$, c'est-à-dire que pour tout $\sigma \in \Gal(\overline{F}/F)$, il existe un unique élément $\omega(\sigma) \in \Omega(G,T_G)$ tel que $\sigma_H = \omega(\sigma) \sigma_G$. On définit alors $\tilde{\chi}_{G,H} : \Gal(\overline{F}/F) \to \C^*$ par

\[
	\tilde{\chi}_{G,H}(\sigma) = \epsilon(\omega(\sigma))
\]
Où $\epsilon$ est l'homomorphisme de signature sur le groupe de Weyl. Enfin, par la théorie du corps de classe local, le caractère $\tilde{\chi}_{G,H} : \Gal(\overline{F}/F) \to \C^*$ correspond à un caractère $\chi_{G,H} : F^{\times} \to \C^*$. Remarquons que comme $\tilde{\chi}_{G,H}$ est une signature, alors $\tilde{\chi}_{G,H}^2 = 1$ et donc $\chi_{G,H}^2 = 1$. On pose $d_{G,H} = \dim(G)-\dim(H)$ et
\begin{equation}
	\psi_{G,H}(t) = \chi_{G,H}(t) \, |t|_F^{d_{G,H}/2}
\end{equation}
On vérifie alors que $\psi_{G,H}(t^2) = |t|_F^{d_{G,H}}$. On reproduit ici un lemme de \cite{Ferrari} sur l'homogénéité du facteur de transfert pour les algèbre de Lie (nous avons pris des notations un peu différentes pour des raisons pratiques).

\begin{lemme}[\cite{Ferrari} lemme 3.2.1]
	
	Soient $X \in \g_{\textrm{reg}}$ un élément semi-simple régulier de $\g$, $Y \in \h_{G-\textrm{reg}}$ un élément semi-simple régulier de $\h$ et $t \in F^{\times}$ un scalaire. On a la relation d'homogénéité
	\begin{equation}
		\Delta_{\g,\h}(tX, tY) = \psi_{G,H}(t) \, \Delta_{\g,\h}(X,Y)
	\end{equation}
\end{lemme}
\begin{proof}
	La preuve est donnée dans \cite{Ferrari} lemme 3.2.1.
\end{proof}
\begin{cor}[\cite{Ferrari} proposition 3.2.2]
	\label{cor:homogeneite_transfert_fonctions}
	Soient $f \in \hecke(G)$ et $f^H \in \hecke(H)$, et $t \in F^{\times}$. On suppose que $f^H$ est un transfert de $f$, alors la fonction $\psi_{G,H}^{-1}(t) (f^H)_t$ est un transfert de $f_t$.
\end{cor}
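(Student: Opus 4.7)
Le plan est de combiner la relation d'homogénéité sur le facteur de transfert établie juste avant avec la relation d'homogénéité élémentaire des intégrales orbitales semi-simples régulières sous le changement de variable $X \mapsto tX$. L'idée est de montrer d'abord que pour tout $Y \in \h_{G-\reg}$, on a
\[
	\OrbInt_{\g,\h}(Y,f_t) = \psi_{G,H}(t)^{-1} \OrbInt_{\g,\h}(tY, f),
\]
puis d'utiliser l'hypothèse de transfert appliquée en $tY$ (ce qui est licite car $tY \in \h_{G-\reg}$ dès que $Y \in \h_{G-\reg}$) pour transformer le membre de droite en une intégrale orbitale stable sur $\h$ en $tY$ de $f^H$, et enfin de revenir en $Y$ sur le côté $H$ par la même homogénéité élémentaire.

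La première étape repose sur l'observation suivante : si $X \in \g_{\reg}$, alors $C_G(tX) = C_G(X)$ (c'est un tore maximal déterminé par l'algèbre de Lie centralisatrice), ce qui entraîne, par un simple changement de variable dans la définition de l'intégrale orbitale, l'identité $\OrbInt_{\g}(X, f_t) = \OrbInt_{\g}(tX, f)$, et analogiquement $\OrbInt_{\h}^{\st}(Y, (f^H)_t) = \OrbInt_{\h}^{\st}(tY, f^H)$ du côté endoscopique, en prenant garde que si $\mathcal{E}(X)$ est un système de représentants des orbites dans $X^{G,\st}$, alors $t\mathcal{E}(X)$ en est un pour $(tX)^{G,\st}$.

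Pour l'étape clé, on déroule la définition de $\OrbInt_{\g,\h}$ :
\[
	\OrbInt_{\g,\h}(Y, f_t) = \sum_{X' \in \mathcal{E}(X)} \Delta_{\g,\h}(Y, X') \, \OrbInt_{\g}(tX', f),
\]
puis on réindexe en $X'' = tX' \in \mathcal{E}(tX)$ et on applique le lemme d'homogénéité du facteur de transfert sous la forme $\Delta_{\g,\h}(Y, X') = \psi_{G,H}(t)^{-1} \Delta_{\g,\h}(tY, tX')$, ce qui reconstitue exactement $\psi_{G,H}(t)^{-1} \OrbInt_{\g,\h}(tY, f)$. Il ne reste plus qu'à invoquer l'hypothèse que $f^H$ est un transfert de $f$ en $tY$ et à appliquer la seconde homogénéité sur $\h$ pour conclure.

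Il n'y a pas véritablement d'obstacle technique ici : le point le plus délicat est simplement le contrôle de la cohérence des normalisations de mesures sur les orbites $X'^G$ et $(tX')^G$ (qui coïncident puisque les centralisateurs coïncident), ainsi que la vérification que la réindexation $X' \leftrightarrow tX'$ respecte bien le système de représentants $\mathcal{E}$. Une fois ces points triviaux vérifiés, la preuve se réduit à concaténer trois identités.
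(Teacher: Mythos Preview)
Your argument is correct and is precisely the natural derivation of the corollary from the preceding homogeneity lemma on the transfer factor. The paper itself does not give a proof here: it simply refers the reader to \cite{Ferrari}, proposition~3.2.2. What you have written is essentially the argument one finds there --- the elementary identity $\OrbInt_{\g}(X,f_t)=\OrbInt_{\g}(tX,f)$ for semi-simple regular $X$ (using $C_G(X)=C_G(tX)$ and the same quotient measure), combined with the reindexing $X'\mapsto tX'$ of the stable packet and the relation $\Delta_{\g,\h}(Y,X')=\psi_{G,H}(t)^{-1}\Delta_{\g,\h}(tY,tX')$, followed by the transfer hypothesis applied at $tY$ and the same elementary homogeneity on the $\h$-side. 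There is no alternative route to compare against; you have supplied the missing details that the paper delegates to the reference.
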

\begin{proof}
	La preuve est donnée dans \cite{Ferrari} proposition 3.2.2.
\end{proof}
En particulier, dans les notations du lemme, la fonction $|t|^{-d_{G,H}} (f^H)_{t^2}$ est un transfert de $f_{t^2}$. Signalons que Shahidi formule un résultat similaire dans \cite{ShahidiLanglandsConjecture} lemme 9.7 (le différence étant qu'il formule le résultat dans les groupes, en définissant la dilatation $f_t$ par transport de structure avec l'exponentielle).
\begin{cor}
	\label{cor:homogeneite_transfert_distributions}
	Soit $D_G \in \hecke(G)^*$ et $D_H \in \hecke(H)^*$ des distributions sur $G$ et $H$ respectivement. On suppose que $D_G$ est un transfert de $D_H$. Alors
	\begin{enumerate}
		\item La distribution $\psi_{G,H}(t) \, \rho(t)D_G$ est un transfert de $\rho(t) D_H$.
		\item Si $D_H$ est homogène de degré $d_H$ alors $D_G$ est homogène de degré $d_G$ avec
		\[
			d_{G,H} = d_G - d_H
		\]
	\end{enumerate}

\end{cor}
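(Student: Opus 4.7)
The plan is to derive both assertions from Corollary \ref{cor:homogeneite_transfert_fonctions} by dualizing through Definition \ref{def:transfertDistributionsLie}. The key structural observation is that if a distribution $D_H$ on $\h$ admits any transfer to $\g$, then that transfer is unique: since every $f \in \hecke(\g)$ admits at least one transfer $f^H \in \hecke(\h)$, the defining relation $D_G(f) = D_H(f^H)$ pins down $D_G(f)$ for all $f$, so two transfers of the same $D_H$ must coincide. I will also use the obvious linearity of the transfer relation (if $D_G$ is a transfer of $D_H$, then $\lambda D_G$ is a transfer of $\lambda D_H$).

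For (1), I would pick an arbitrary transfer pair $(f, f^H)$, so that $f^H$ is a transfer of $f$. By Corollary \ref{cor:homogeneite_transfert_fonctions}, $\psi_{G,H}(t)^{-1} (f^H)_t$ is a transfer of $f_t$. Applying the hypothesis that $D_G$ is a transfer of $D_H$ to this new pair and using linearity, I obtain
\[
	D_G(f_t) = D_H\bigl(\psi_{G,H}(t)^{-1} (f^H)_t\bigr) = \psi_{G,H}(t)^{-1}\, D_H\bigl((f^H)_t\bigr).
\]
Rewriting both sides via $\rho$ and multiplying by $\psi_{G,H}(t)$ yields
\[
	\psi_{G,H}(t)\,[\rho(t) D_G](f) = [\rho(t) D_H](f^H).
\]
As this holds for every transfer pair $(f,f^H)$, the distribution $\psi_{G,H}(t)\,\rho(t) D_G$ is a transfer of $\rho(t) D_H$.

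For (2), I would specialize (1) to $t^2$ in place of $t$. By construction $\psi_{G,H}(t^2) = |t|_F^{d_{G,H}}$, and by hypothesis $\rho(t^2) D_H = |t|_F^{-d_H} D_H$. Part (1) then says that $|t|_F^{d_{G,H}}\,\rho(t^2) D_G$ is a transfer of $|t|_F^{-d_H} D_H$, and multiplying both by $|t|_F^{d_H}$ shows that $|t|_F^{d_{G,H}+d_H}\,\rho(t^2) D_G$ is a transfer of $D_H$. Since $D_G$ itself is a transfer of $D_H$, uniqueness forces
\[
	\rho(t^2) D_G = |t|_F^{-(d_{G,H}+d_H)}\, D_G,
\]
i.e.\ $D_G$ is homogeneous of degree $d_G := d_{G,H} + d_H$, which rearranges to $d_{G,H} = d_G - d_H$.

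No real obstacle is expected: the argument is a clean algebraic dualization of the functional identity. The one point deserving a line of justification is the uniqueness of the transfer of distributions, which follows immediately from Definition \ref{def:transfertDistributionsLie} together with the existence of transfers of functions in the direction $\g \to \h$.
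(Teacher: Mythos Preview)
Your proof is correct and follows essentially the same dualization argument as the paper. The only cosmetic difference is in part (2): the paper computes $D_G(f_{t^2})$ directly from the transfer relation and the homogeneity of $D_H$, whereas you route through part (1) and invoke uniqueness of the transfer of distributions---both amount to the same computation.
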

\begin{proof}
	\ \\
	\begin{enumerate}
		\item Soient $f \in \hecke(G)$ et $f^H \in \hecke(H)$ tels que $f^H$ est un transfert de $f$. On a alors
		\begin{align*}
			[\psi_{G,H}(t) \, \rho(t)D_G](f) &= D_G(\psi_{G,H}(t) \, f_t) \\
			&= D_H( (f^H)_t ) \\
			&= [\rho(t) D_H](f^H)
		\end{align*}
		Ce qui montre le premier point.
		\item Supposons $D_H$ est homogène de degré $d_H$. Soit $f \in \hecke(G)$. On se donne $f^H \in \hecke(H)$ un transfert de $f$. Alors on a
		\begin{align*}
			D_G(f_{t^2}) &= D_H (|t|^{-d_{G,H}} (f^H)_{t^2}) \\
			&= |t|^{-d_{G,H}} D_H ((f^H)_{t^2}) \\
			&= |t|^{-d_{G,H}-d_H} D_H(f^H) \\
			&= |t|^{-d_{G,H}-d_H} D_G(f)
		\end{align*}
		Ce qui signifie que $D_G$ est homogène de degré $d_G = d_H + d_{G,H}$.
	\end{enumerate}
\end{proof}


\subsection{Homogénéité et endoscopie non standard} 
\label{sub:homogeneite_endoscopie_non_standard}

On se donne $(G_1, G_2, j)$ un triplet endoscopique non standard. Comme le facteur de transfert vaut identiquement $1$ (donc est homogène de degré $0$), on déduit un résultat analogue d'homogénéité.

\begin{cor}
	\label{cor:homogeneite_transfert_distributions_non_standard}
	Soit $D_1 \in \hecke(G_1)^*$ et $D_2 \in \hecke(G_2)^*$ des distributions sur $G_1$ et $G_2$ respectivement. On suppose que $D_1$ est un transfert de $D_2$. Alors
	\begin{enumerate}
		\item La distribution $\rho(t)D_1$ est un transfert de $\rho(t) D_2$.
		\item Si $D_2$ est homogène de degré $d$ alors $D_1$ aussi.
	\end{enumerate}
	
\end{cor}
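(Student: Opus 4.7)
Le plan est de reproduire la preuve du corollaire \ref{cor:homogeneite_transfert_distributions} en la simplifiant, puisque le facteur de transfert non standard est identiquement égal à $1$ (donc le caractère $\psi_{G,H}$ disparaît). En particulier les deux énoncés résulteront essentiellement du fait, analogue du corollaire \ref{cor:homogeneite_transfert_fonctions} dans le cadre non standard, que si $f_2 \in \hecke(\g_2)$ est un transfert de $f_1 \in \hecke(\g_1)$, alors $(f_2)_t$ est un transfert de $(f_1)_t$ pour tout $t \in F^{\times}$.

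Pour établir cet énoncé auxiliaire, je constaterais que la correspondance entre classes de conjugaison stable semi-simples régulières dans $\g_1$ et $\g_2$ provient de l'isomorphisme $F$-linéaire $j_*$ entre les tores, lequel commute à la multiplication par $t$. Ainsi, si $X_1 \in \g_1$ et $X_2 \in \g_2$ ont des classes de conjugaison stables qui se correspondent, il en va de même pour $tX_1$ et $tX_2$. Un changement de variable dans les intégrales orbitales (avec des normalisations de mesures compatibles des deux côtés, ce qui ne pose pas de difficulté parce que le Jacobien est le même sur $\g_1$ et $\g_2$) donne alors
\[
    \OrbInt_{\g_1}^{\st}(X_1, (f_1)_t) = \OrbInt_{\g_2}^{\st}(X_2, (f_2)_t)
\]
ce qui est exactement la définition du transfert pour $(f_1)_t$ et $(f_2)_t$.

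Armé de cela, le premier point suit d'un calcul direct : pour $f_1 \in \hecke(\g_1)$ de transfert $f_2$, on a $[\rho(t) D_1](f_1) = D_1((f_1)_t) = D_2((f_2)_t) = [\rho(t) D_2](f_2)$, ce qui prouve que $\rho(t) D_1$ est un transfert de $\rho(t) D_2$. Pour le deuxième point, en supposant $D_2$ homogène de degré $d$, on écrit pour $f_1 \in \hecke(\g_1)$ de transfert $f_2$,
\[
    D_1((f_1)_{t^2}) = D_2((f_2)_{t^2}) = |t|^{-d} D_2(f_2) = |t|^{-d} D_1(f_1)
\]
ce qui donne $\rho(t^2) D_1 = |t|^{-d} D_1$, c'est-à-dire que $D_1$ est homogène de degré $d$. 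Le seul point délicat est donc la compatibilité de la dilatation avec la correspondance endoscopique non standard, mais elle découle immédiatement de la $F$-linéarité de $j_*$ et de la trivialité du facteur de transfert.
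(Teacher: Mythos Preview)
Your proof is correct and is precisely the spelled-out version of what the paper intends: the paper's own proof consists of the single sentence ``La preuve est la même qu'en \ref{cor:homogeneite_transfert_distributions} en plus simple'', and you have written out exactly that simplification, including the non-standard analogue of the corollaire \ref{cor:homogeneite_transfert_fonctions} (dilation commutes with transfer) which holds because $j_*$ is $F$-linear and the transfer factor is identically $1$.
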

\begin{proof}
	La preuve est la même qu'en \ref{cor:homogeneite_transfert_distributions} en plus simple.
\end{proof}

\begin{cor}
	Il existe $\lambda \in \C$ tel que la distribution $\lambda \OrbInt_{\g_1}(0, .)$ soit le transfert de l'intégrale orbitale $\OrbInt_{\g_2}^{\st}(0, .) = \OrbInt_{\g_2}(0, .)$.
	En outre, si $p$ est suffisamment grand, la constante $\lambda$ est strictement positive.
\end{cor}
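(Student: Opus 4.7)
Le plan est d'invoquer l'existence du transfert non standard pour les intégrales orbitales stables nilpotentes, puis de combiner l'homogénéité héritée par transfert avec un argument de support dans le cône nilpotent, pour conclure grâce à la classification des distributions invariantes homogènes à support nilpotent donnée par le lemme \ref{lemme:homogeneite_independance}. D'abord, j'observe que $\{0\}$ étant sa propre classe de conjugaison stable dans $\g_2 = \so_{2n+1}(F)$, on a effectivement $\OrbInt_{\g_2}^{\st}(0, \cdot) = \OrbInt_{\g_2}(0, \cdot)$, et d'après le lemme \ref{lemme:homogoneite_integrales_orbitales} cette distribution est homogène de degré $d(\{0\}) = 0$.

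Ensuite, j'invoque l'existence du transfert des intégrales orbitales stables nilpotentes dans le cadre non standard (conséquence des travaux de Ngô et de Waldspurger) pour obtenir une distribution $D_1 \in \distr(\g_1)^{G_1}$ transférée de $\OrbInt_{\g_2}^{\st}(0, \cdot)$. Par le corollaire \ref{cor:homogeneite_transfert_distributions_non_standard}, $D_1$ est alors homogène de degré $0$. Il reste à montrer que $D_1$ est à support dans le cône nilpotent $\mathcal{N} \subset \g_1$ : la correspondance des classes stables semi-simples régulières décrite en \ref{sub:endoscopie_non_standard}, caractérisée par l'identité $\Lambda(Y) = \Lambda(X) \cup \set{0}$, n'apparie la classe de $0 \in \g_2$ qu'avec celle de $0 \in \g_1$, si bien que toute fonction test $f_1 \in \hecke(\g_1)$ à support disjoint de $\mathcal{N}$ doit admettre un transfert $f_2 \in \hecke(\g_2)$ que l'on peut choisir à support disjoint de $0$, sur lequel $\OrbInt_{\g_2}^{\st}(0, \cdot)$ s'annule. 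Ainsi $D_1 \in \distr(\g_1)^{G_1}(\mathcal{N})_0$. Or l'orbite $\{0\}$ est la seule orbite nilpotente de dimension $0$ dans $\g_1$, donc le lemme \ref{lemme:homogeneite_independance} entraîne l'égalité $\distr(\g_1)^{G_1}(\mathcal{N})_0 = \C \cdot \OrbInt_{\g_1}(0, \cdot)$, d'où l'existence d'une unique constante $\lambda \in \C$ avec $D_1 = \lambda \, \OrbInt_{\g_1}(0, \cdot)$.

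Pour la positivité lorsque $p$ est assez grand, le plan est d'appliquer le lemme fondamental non standard : pour $p$ grand, la fonction caractéristique $\mathbf{1}_{\g_1(\lie{o})}$ est, à une constante de normalisation explicite strictement positive près, un transfert de $\mathbf{1}_{\g_2(\lie{o})}$. L'égalité $\lambda \, \OrbInt_{\g_1}(0, \mathbf{1}_{\g_1(\lie{o})}) = \OrbInt_{\g_2}^{\st}(0, \mathbf{1}_{\g_2(\lie{o})})$ met alors en relation deux mesures de sous-ensembles compacts non vides, toutes deux strictement positives, ce qui impose $\lambda > 0$. La partie la plus délicate du schéma est le contrôle rigoureux du support de $D_1$ dans $\mathcal{N}$, qui nécessite un argument de localisation du transfert à l'écart des éléments semi-simples non triviaux, le reste étant essentiellement formel une fois l'homogénéité établie.
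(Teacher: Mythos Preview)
Your proposal is correct and follows essentially the same approach as the paper: homogeneity of $\OrbInt_{\g_2}(0,\cdot)$ transported by the corollary \ref{cor:homogeneite_transfert_distributions_non_standard}, combined with nilpotent support and the lemma \ref{lemme:homogeneite_independance} to force proportionality to $\OrbInt_{\g_1}(0,\cdot)$, then the non-standard fundamental lemma on characteristic functions of hyperspecial lattices for the positivity of $\lambda$. The paper's own proof is terser and simply asserts that ``le transfert d'une distribution à support nilpotent est à support nilpotent'' without the localisation argument you sketch; you correctly identify this as the only delicate step.
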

\begin{proof}
	La distribution $\OrbInt_{\g_2}^{\st}(0, .)$ est homogène de degré $0$, donc d'après le corollaire \ref{cor:homogeneite_transfert_distributions_non_standard}, son transfert l'est aussi. Comme le transfert d'une distribution à support nilpotent est à support nilpotent, d'après le lemme \ref{lemme:homogeneite_independance}, ce transfert s'écrit donc $\lambda \OrbInt_{\g_1}(0, .)$ avec $\lambda \in \C$.
	
	Pour calculer la constante $\lambda$, il suffit de connaitre explicitement le transfert pour un coupe donné de fonctions. C'est en particulier l'objet du lemme fondamental non standard, valable pour $p$ suffisamment grand. Pour $i \in \{1,2\}$, si $f_i$ est la fonction caractéristique d'un ``réseau hyperspécial'' de $\g_i$ alors celui-ci affirme que $cf_2$ est un transfert de $f_1$ pour $c > 0$ une constante qui dépend du choix des mesures (voir \cite{WaldspurgerEndoscopieTordue} pour les définitions précises). En particulier, on tire donc $\lambda = c$.
\end{proof}

\begin{rem}
	Waldspurger nous a suggéré une autre méthode pour le calcul de la constante à partir des expressions des Germes de Shalika.
\end{rem}



\section{Endoscopie pour les groupes} 
\label{sec:endoscopie_pour_les_groupes}

On définit de manière analogue une notion de transfert endoscopique pour les groupes (en fait cette notion est antérieure historiquement). On définit notamment un facteur de transfert $\Delta_{G,H}$ pour les groupes $G$ et $H$ (voir la définition dans \cite{LSTransferFactor}) et dans le cadre plus général de l'endoscopie tordue (voir \cite{KottwitzShelstad}) qui fait correspondre les classes de conjugaison stables d'éléments semi-simples réguliers de $G$ et $H$, et les intégrales orbitales stables et endoscopiques. Cela permet de définir le transfert de manière analogue $f^{H} \in \Hecke(H)$ est un transfert de $f \in \Hecke(G)$ si les intégrales orbitales stables semi-simple régulières de $f^{H}$ sont égales aux intégrales ``endoscopiques'' correspondantes de $f$. C'est-à-dire que pour $h$ l'on pose
\[
	\OrbInt_{H}^{\st}(h,.) = \sum_{h' \in \mathcal{E}(h)} \OrbInt_{H}(h', .)
\]
et
\[
	\OrbInt_{G, H}(h,.) = \sum_{g} \Delta_{G,H}(h,g) \, \OrbInt_{G}(g, .)
\]
Et que l'on dit que $f^H$ est transfert de $f$ si $\OrbInt_{H}^{\st}(h,f^H) = \OrbInt_{G, H}(h,f)$ pour tout $h \in H$ suffisamment régulier. De manière duale, les distributions stables sur $H$ se transfèrent à $G$ comme sur les algèbres de Lie.

\subsection{Les groupes $\SO(2n+1)$ et $\GL_{2n}$ tordu} 
\label{sub:les_groupes_so_2n_1_et_gl_2n_tordu}

Dans le cas qui nous intéresse en particulier, le groupe endoscopique principal de $\widetilde{G^0} = \GL_{2n}(F) \theta$ est $G' = \SO_{2n+1}(F)$ (voir \cite{ArthurTraceFormula2} paragraphe 9). Ici le facteur de transfert est trivial (i.e. égal à $1$), le calcul est fait dans \cite{WaldspurgerFormulaire} 1.11.

Donc les fonctions sur $\widetilde{G^0}$ se transfèrent à $G'$, et de manière duale, les distributions stables sur $G'$ se transfèrent à $\widetilde{G^0}$.

La correspondance entre classes de conjugaisons stables semi-simples régulières de $\widetilde{G^0}$ et $G'$ est décrite explicitement dans \cite{WaldspurgerGLnTordu} par l'application norme $\widetilde{G^0}_{\textrm{reg}}/\st \to G'_{\textrm{reg}}/\st$. Rappelons sa définition. Si $g\theta \in \widetilde{G^0}$ est semi-simple régulier, on note $\Lambda(g\theta)$ l'ensemble des valeurs propres de $\theta(g)g \in \GL_{2n}(F)$ (qui sont toutes distinctes). De même, si $h \in \SO(2n+1)$ est semi-simple régulier, on note $\Lambda(h)$ l'ensemble des valeurs propres de $h$ (qui sont elles aussi distinctes et parmi lesquelles il y a $1$).
La norme de la classe de conjugaison stable de $g\theta$ est la classe de conjugaison stable de $h$ si et seulement si $\Lambda(h) = \set{-x, x \in \Lambda(g\theta)} \cup \set{1}$.


\subsection{Transfert et exponentielle} 
\label{sub:transfert_et_exponentielle}

Nous allons montrer une relation de commutation du transfert à l'exponentielle. On pose $\gamma = \mat{0 & I_n\\-I_n & 0}$ et $\eta = \gamma \theta \in \widetilde{G^0}$ dont le centralisateur est le groupe symplectique.

\begin{lemme}
	\label{lemme:normeExponentielle}
	Soient $X \in \h = \so(2n+1)$ et $Y \in \g_{\eta} = \lie{sp}(2n)$ des éléments semi-simples réguliers. Alors les propositions suivantes sont équivalentes :
	\begin{enumerate}
		\item Les classes de conjugaison stables de $X$ et $Y$ se correspondent.
		\item Il existe $r > 0$ tel que pour $t \in F^*$ tel que $|t| \le r$, les classes de conjugaison stables de $\exp(2 t X) \in \SO(2n+1)$ et de $\exp(t Y)\eta \in \GL(2n) \theta$ se correspondent.
		\item Il existe $t \in F^*$ tel que les classes de conjugaison stables de $\exp(2 t X) \in \SO(2n+1)$ et de $\exp(t Y)\eta \in \GL(2n) \theta$ se correspondent.
	\end{enumerate}
\end{lemme}
\begin{proof}
	Il s'agit de déterminer les relations entre les spectres respectifs. Soient $X \in \h = \so(2n+1)$ et $Y \in \g_{\eta} = \lie{sp}(2n)$ des éléments semi-simples réguliers. On se donne $t \in F^*$ suffisamment petit pour que l'exponentielle soit définie en $2 t X$ et $tY$. On a alors
	\[
		\Lambda(\exp(2tX)) = \exp(2t\Lambda(X))
	\]
	et
	\begin{align*}
		\Lambda(\exp(tY)\eta) &= \Lambda( \theta(\exp(tY)\gamma) \exp(tY) \gamma) \\
		&= \Lambda( \theta(\gamma)\exp(tY) \exp(tY) \gamma) \\
		&= \Lambda( \gamma\exp(2tY)\gamma) \\
		&= \Lambda( -\gamma\exp(2tY)\gamma^{-1}) \\
		&= \Lambda( -\exp(2tY)) \\
		&= -\exp(2t \Lambda(Y))
	\end{align*}
	Or il est clair que l'on a équivalence entre
	\begin{enumerate}
		\item $\Lambda(X) = \Lambda(Y) \cup \{0\}$
		\item $\exists t \in F^*, \, \exp(2t\Lambda(X)) = -\exp(2t \Lambda(Y)) \cup \{1\}$
		\item $\exists r > 0, \forall t \in F^*,\, |t| \le r,\, \exp(2t\Lambda(X)) = -\exp(2t \Lambda(Y)) \cup \{1\}$
	\end{enumerate}
\end{proof}

En particulier si les éléments semi-simples réguliers $X \in \h = \so(2n+1)$ et $Y \in \g_{\eta} = \lie{sp}(2n)$ se correspondent et sont dans les ouverts respectifs de convergence de l'exponentielle (s'ils se correspondent, ils sont simultanément dans le domaine de convergence) alors $\exp(2X)$ et $\exp(Y)\eta$se correspondent. De manière duale, on tire le lemme suivant.

\begin{lemme}
	\label{lemme:commutation_exponentielle_transfert_fonctions}
	Soit $f^H \in \hecke(H)$ est un transfert de $f \in \hecke(G)$.
	Soient $\varphi \in \hecke(\g_{\eta})$ et $\lie{U} \subset \g_{\eta}$ comme dans le lemme \ref{lemme:descente}, 
	alors pour tout $Y \in \lie{U}$, on a
	\[
		\OrbInt_{\h}^{\st}(X, \exp_2^* f^H) = \OrbInt_{\g_{\eta}}^{\st}(Y, \varphi)
	\]
	où l'on a noté $\exp_2(X) = \exp(2X)$.
\end{lemme}
\begin{proof}
	Si $Y \in \lie{U}$ alors l'exponentielle converge en $2X$ et on a
	\begin{align*}
		\OrbInt_{\h}^{\st}(X, \exp_2^* f^H) &= \OrbInt_{H}^{\st}(\exp(2X), f^H) \\
			&= \OrbInt_{G}^{\st}(\exp(Y)\eta, f) \\
			&= \OrbInt_{\g_{\eta}}^{\st}(Y, \varphi)
	\end{align*}

	%
\end{proof}
En d'autres termes, le lemme signifie que $\exp_2^* f^H$ est un transfert local de $\exp_{\eta, G}^* f$ au voisinage de $0$. Cela permet d'obtenir le lemme suivant.
\begin{cor}
	\label{cor:commutation_exponentielle_transfert_distributions}
	Si $d^G \in \distr(\g_{\eta})$ est un transfert de $d^H \in \distr(\h)$ avec $\Supp(d^G) \subset \lie{U}$, alors $\exp_{\eta, G}^{**}d^G$ est un transfert de $\exp_2^{**}d^H$.
\end{cor}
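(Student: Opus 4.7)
The plan is to unfold the definition of transfer of distributions and reduce the claim to the functional transfer identity of lemme~\ref{lemme:commutation_exponentielle_transfert_fonctions}, using the support hypothesis on $d^G$ to promote that local-near-zero statement on functions to the required global equality on distributions. Fix $f \in \hecke(\widetilde{G^0})$ and $f^H \in \hecke(\h)$ a transfer of $f$; by definition of transfer of distributions, the goal reduces to proving
\[
	d^G(\exp_{\eta, G}^{*} f) = d^H(\exp_2^{*} f^H).
\]

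First I would pick a representative $\varphi \in \hecke(\g_{\eta})$ of the class $\exp_{\eta, G}^{*} f$ and a global non-standard endoscopic transfer $\psi \in \hecke(\h)$ of $\varphi$ (which exists by the non-standard transfer theorem). Since non-standard endoscopy pairs stable distributions on both sides, $d^G(\varphi) = d^H(\psi)$ by definition, and the problem reduces to showing $d^H(\beta) = 0$ where $\beta = \psi - \exp_2^{*} f^H$. The function $\beta$ has vanishing stable orbital integrals at every $X \in \h_{G-\reg}$ matching a semi-simple regular $Y \in \lie{U}$: combining the transfer relation $\OrbInt_{\h}^{\st}(X, \psi) = \OrbInt_{\g_{\eta}}^{\st}(Y, \varphi)$ with the identity $\OrbInt_{\g_{\eta}}^{\st}(Y, \varphi) = \OrbInt_{\h}^{\st}(X, \exp_2^{*} f^H)$ from lemme~\ref{lemme:commutation_exponentielle_transfert_fonctions} gives this vanishing directly.

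Next I would pick $\beta' \in \hecke(\g_{\eta})$ a transfer of $\beta$, so that $d^G(\beta') = d^H(\beta)$ and $\beta'$ inherits vanishing stable orbital integrals on all of $\lie{U}$. The hypothesis $\Supp(d^G) \subset \lie{U}$ combined with the fact that $\lie{U}$ is a $G_{\eta}$-domain lets me replace $\beta'$ by the truncation $\beta' \cdot \mathbf{1}_{\lie{U}}$ without altering the pairing with $d^G$, since $\beta' \mathbf{1}_{\lie{U}^{c}}$ is supported off $\lie{U}$. This truncation has all its stable orbital integrals equal to zero---vanishing on $\lie{U}$ by construction, vanishing outside $\lie{U}$ by the support---so the stability of $d^G$ (which holds because $d^G$ belongs to a non-standard endoscopic transfer pair, which by definition lives between stable distributions) forces $d^G(\beta' \mathbf{1}_{\lie{U}}) = 0$. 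Hence $d^H(\beta) = 0$, which is what was wanted.

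The main obstacle is a subtle compatibility between $\lie{U}$ and stable conjugation: for the truncation $\beta' \mathbf{1}_{\lie{U}}$ to have vanishing stable orbital integrals outside $\lie{U}$, one needs $\lie{U}$ to be invariant under stable conjugation and not merely under $G_{\eta}(F)$-conjugation. This is not automatic from the statement of lemme~\ref{lemme:descente}, but can be arranged by shrinking $\lie{U}$ to a stable-invariant neighborhood of $0$ within the standard descent construction. Modulo this technical adjustment, the remainder of the argument is a routine dualization of the functional identity.
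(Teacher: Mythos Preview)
Your proposal is correct and follows the same overall strategy as the paper: unfold the definition of transfer of distributions and reduce to the functional identity of lemme~\ref{lemme:commutation_exponentielle_transfert_fonctions}, using the support hypothesis to pass from the local statement to the desired equality. The paper's proof is much terser: it simply asserts that $\exp_2^* f^H$ coincides with a transfer of $\exp_{\eta,G}^* f$ on the support of $d^H$ and concludes in a three-line chain of equalities, implicitly relying on the fact that $\Supp(d^H)$ lies in the region of $\h$ corresponding to $\lie{U}$ under the non-standard correspondence. Your variant sidesteps this implicit support-correspondence claim by transferring the difference $\beta$ back to the $\g_\eta$ side and truncating by $\mathbf{1}_{\lie{U}}$ there, where the support hypothesis on $d^G$ applies directly; this is a legitimate and arguably cleaner execution of the same idea. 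The stable-invariance of $\lie{U}$ that you flag is indeed a technical point the paper glosses over, and your remark that it can be arranged by shrinking $\lie{U}$ within the descent construction is the right fix.
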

\begin{proof}
	Remarquons que $\exp_{\eta, G}^{**}d^G$ est bien défini car $d^G$ est stable.
	On se donne $f$ et $f^H$ telles que $f^H$ est transfert de $f$. D'après le lemme \ref{lemme:commutation_exponentielle_transfert_fonctions}, la fonction $\exp_2^* f^H$ coïncide avec un transfert de $\exp_{\eta, G}^* f$ sur le support de $d_H$, donc
	\begin{align*}
		\exp_{\eta, G}^{**}d^G(f) &= d^G(\exp_{\eta, G}^* f) \\
		&= d^H(\exp_2^* f^H) \\
		&= \exp_2^{**}d^H(f^H)
	\end{align*}
\end{proof}

\subsection{Transfert des intégrales orbitales} 
\label{sub:transfert_des_integrales_orbitales}

L'intégrale orbitale $\OrbInt_{G'}(1, \cdot)$ associée à $1 \in G'$ (ce qui correspond au Dirac en $1$) est stable d'après un résultat de Kottwitz donc peut se transférer. La proposition suivante est également démontrée dans \cite{Shahidi} prop 7.3.
\begin{prop}
	\label{prop:transfertIntegraleOrbitale}
	Il existe $\lambda > 0$ tel que $\lambda \OrbInt_{\widetilde{G^0}}(\eta, \cdot)$ soit le transfert de l'intégrale orbitale $\OrbInt_{G'}(1, \cdot)$.
\end{prop}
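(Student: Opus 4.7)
The plan is to reduce, via descent along the exponential at $1 \in G'$ and at $\eta \in \widetilde{G^0}$, to the analogous statement at the Lie-algebra level, which is precisely the non-standard endoscopy between $\so_{2n+1}$ and $\lie{sp}_{2n}$ that has already been handled in the last corollary of Section 5.

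First I would descend on both sides. On the orthogonal side, the centralizer of $1 \in G'$ is $G'$ itself, so Lemma \ref{lemme:IntegraleOrbitaleStableExponentielle} yields, after a suitable normalization of measures,
\[
	\exp_2^{**}\bigl(\OrbInt_{\so_{2n+1}}(0, \cdot)\bigr) = \OrbInt_{G'}(1, \cdot);
\]
both sides are proportional to a Dirac mass (the orbits of $0$ and of $1$ are reduced to a point), so the equality amounts to a normalization, the extra factor of two hidden in $\exp_2$ contributing only a power of $|2|_F$. On the twisted side, the centralizer of $\eta$ is the symplectic group, with Lie algebra $\g_\eta = \lie{sp}_{2n}$, and Lemma \ref{lemme:descente} applied at $\eta$ gives
\[
	\exp_{\eta, G}^{**}\bigl(\OrbInt_{\lie{sp}_{2n}}(0, \cdot)\bigr) = \OrbInt_{\widetilde{G^0}}(\eta, \cdot).
\]

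Next I would invoke the non-standard endoscopy corollary from Section 5: since $\OrbInt_{\so_{2n+1}}(0, \cdot)$ is stable and homogeneous of degree $0$, there exists $\lambda > 0$ such that $\lambda\,\OrbInt_{\lie{sp}_{2n}}(0, \cdot)$ is its non-standard transfer, with positivity coming from the non-standard fundamental lemma. The common support $\{0\}$ lies inside the $M$-domain $\lie{U}$ of Lemma \ref{lemme:descente}, so Corollary \ref{cor:commutation_exponentielle_transfert_distributions} applies and lifts the transfer relation up to the group level, producing exactly that $\lambda\,\OrbInt_{\widetilde{G^0}}(\eta, \cdot)$ is a transfer of $\OrbInt_{G'}(1, \cdot)$.

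The main delicate point I expect is the bookkeeping of measures: the scaling by two in $\exp_2$ produces a Jacobian that must be absorbed into $\lambda$, and the invariant measures on $G^0/\Sp_{2n}$, on $G'$, on $\lie{sp}_{2n}$ and on $\so_{2n+1}$ must be normalized compatibly so that the two descent equalities hold on the nose, rather than only up to a positive scalar. Positivity of $\lambda$ outside the range of the fundamental lemma can be recovered a posteriori using Lemma \ref{lemme:homogeneite_independance}: any transfer of $\OrbInt_{\so_{2n+1}}(0, \cdot)$ is a $G^0$-invariant nilpotent-supported distribution on $\lie{sp}_{2n}$ that is homogeneous of degree $0$, hence a scalar multiple of $\OrbInt_{\lie{sp}_{2n}}(0, \cdot)$, and the sign of the scalar is then determined by testing on a non-negative function whose transfer is non-negative in a neighborhood of the origin.
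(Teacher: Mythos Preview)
Your approach is essentially the same as the paper's: reduce to the Lie-algebra level via non-standard endoscopy between $\so_{2n+1}$ and $\lie{sp}_{2n}$ (the last corollary of Section~5), then lift back to the group via Corollary~\ref{cor:commutation_exponentielle_transfert_distributions}. The paper's proof is just a terse two-sentence version of what you wrote; your explicit descent on the orthogonal side via Lemma~\ref{lemme:IntegraleOrbitaleStableExponentielle} is correct but already subsumed in Corollary~\ref{cor:commutation_exponentielle_transfert_distributions}.

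One caveat: your final paragraph goes beyond the paper. The paper obtains $\lambda>0$ only through the non-standard fundamental lemma (hence for $p$ large enough), and does not attempt to remove that hypothesis. Your suggested workaround---testing on a non-negative function whose transfer is non-negative near the origin---is not obviously valid, since the transfer operation is defined only modulo $\transfertZero(\g_2)$ and there is no general reason a non-negative function should admit a non-negative transfer. If you want positivity in general you would need a genuinely different argument (the paper mentions in a remark that Waldspurger suggested computing the constant via Shalika germs).
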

\begin{proof}
	D'après le corollaire \ref{cor:homogeneite_transfert_distributions_non_standard}, le transfert de la distribution $\OrbInt_{\g'}(0, \cdot)$ est homogène de degré $0$ donc est de la forme $\lambda \OrbInt_{\g}(0, \cdot)$ avec $\lambda > 0$, et donc d'après le corollaire \ref{cor:commutation_exponentielle_transfert_distributions}, le transfert de $\OrbInt_{G'}(1, \cdot)$ est $\lambda \OrbInt_{\widetilde{G^0}}(\eta, \cdot)$.
\end{proof}

\subsection{Transfert des caractères} 
\label{sub:transfert_des_caracteres}

Dans \cite{Arthur}, J. Arthur définit, pour les groupes classiques, les $L$-paquets de représentations tempérées et donc en particulier ceux pour les groupes $\SO(2n+1,F)$, $n \in \N$. Plus précisément, il prouve, traduit en termes de $G'$, le résultat suivant (qui dépend de la stabilisation de la formules des traces tordues et de résultats annexes qui sont en cours de rédaction par le groupe de travail Marseille-Paris):

\null

\begin{thm}[ \cite{Arthur} 1.5.1]
	\label{thm:arthur}
	\ \\
	\begin{enumerate}
		\item Lorsque $P' = M'U'$ est un sous-groupe parabolique standard de $G'$ et $\Sigma$ un $L$-paquet de représentations irréductibles de carré intégrables de $M'$, alors les composantes irréductibles des $\Ind_{P'}^{G'}\sigma $, $\sigma \in \Sigma$, forment un $L$-paquet $\Pi$ et la distribution $\Sigma_{\sigma\in\Sigma } \Tr(\Ind_{P'}^{G'}\sigma )$ est stable. Chaque élément $\pi $ de $\Pi $ est composante d'une seule induite $\Ind_{P'}^{G'}\sigma $ et y apparaît avec multiplicité $1$.
		\item Soit $\Pi $ un $L$-paquet de représentations irréductibles tempérées de $G'$. Il existe un sous-groupe parabolique standard $P' = M'U'$ de $G'$ et un $L$-paquet $\Sigma$ de représentations irréductibles de carré intégrable de $M'$ tel que les éléments de $\Pi$ soient les composantes irréductibles des $\Ind_{P'}^{G'}\sigma $, $\sigma\in\Sigma $.

		Alors, il existe une unique représentation irréductible tempérée symplectique $\tau_{\Pi}$ de $G^o$ telle que, pour tout $f\in \hecke(G')$,
		\[
			\sum_{\sigma\in\Sigma } \Tr((\Ind_{P'}^{G'}\sigma) (f^{G'}))= \Tr_{\widetilde{G^0}}(\tau _{\Pi }^+(f))
		\]
		Où $\tau _{\Pi }^+$ est un prolongement à $G$ de $\tau_{\Pi}$ défini en \cite{Arthur} 2.2 (vor également \cite{ArthurEndoscopicClassification} p. 11). Toute représentation irréductible tempérée symplectique de $G^{0}$ est obtenue ainsi.
	\end{enumerate}
\end{thm}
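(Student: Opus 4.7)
Il s'agit d'un résultat profond dû à J. Arthur, conditionnel à la stabilisation de la formule des traces tordue pour $\GL_{2n}$ et d'hypothèses auxiliaires. Je me contente d'indiquer la stratégie que je suivrais, en suivant l'esprit de \cite{Arthur}.

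Le plan est de procéder en deux temps : d'abord traiter le cas des séries discrètes (ou plus généralement du carré intégrable), puis déduire le cas tempéré général par induction parabolique. Pour le point (1), la clef est que l'induite $\Ind_{P'}^{G'}\sigma$ ne dépend, en tant que représentation virtuelle, que de la classe d'équivalence infinitésimale de $\sigma$, et que la somme $\sum_{\sigma \in \Sigma} \Tr(\Ind_{P'}^{G'}\sigma)$ est stable dès que le caractère $\sum_{\sigma \in \Sigma} \Tr \sigma$ l'est sur $M'$. La décomposition en composantes irréductibles et l'assertion de multiplicité un s'obtiennent via la théorie de Knapp-Stein et l'analyse du $R$-groupe associé, qui garantit que l'entrelacement normalisé engendre une algèbre commutative dont les caractères paramètrent les composantes.

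Pour le point (2), qui est le cœur de la classification endoscopique, je procéderais ainsi. On dispose, par hypothèse, du transfert endoscopique $f \mapsto f^{G'}$ de $\hecke(\widetilde{G^0})$ vers $\hecke(G')$, et ce transfert est dual du transfert des distributions stables de $G'$ vers $\widetilde{G^0}$. Pour un $L$-paquet $\Sigma$ de représentations de carré intégrable de $M'$, la distribution stable $\sum_{\sigma} \Tr \sigma$ se transfère en une distribution tordue stable sur un espace tordu de Levi convenable de $\widetilde{G^0}$. En appliquant la stabilisation de la formule des traces tordue à $\GL_{2n}$, et en utilisant la classification locale pour $\GL_{2n}$ (où les représentations tempérées sont classifiées par Langlands et auto-duales si et seulement si elles se prolongent à $G$), on montre que ce transfert s'identifie au caractère tordu $\Tr_{\widetilde{G^0}} \tau^+$ d'une unique représentation tempérée auto-duale $\tau$ de $G^0$. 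La parité symplectique (versus orthogonale) est encodée par la nature de la donnée endoscopique : $\SO_{2n+1}$ correspond au groupe dual $\Sp_{2n}(\C)$, donc le paramètre de Langlands de $\tau$ se factorise à travers $\Sp_{2n}(\C)$, c'est-à-dire $\tau$ est symplectique. Le passage au cas tempéré général se fait ensuite en observant que le transfert commute à l'induction parabolique (au sens approprié des espaces tordus) et que l'opération $\sigma \mapsto \Ind_{P'}^{G'}\sigma$ est compatible avec l'induction correspondante $\tau_0 \mapsto \Ind \tau_0$ côté $\GL_{2n}$.

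L'obstacle principal est bien sûr la stabilisation de la formule des traces tordue, qui fait intervenir l'ensemble de l'appareil de l'endoscopie tordue (lemme fondamental pondéré, lemme fondamental tordu, identités de caractères pondérés), ainsi que l'établissement préalable du cas stable pour $\SO_{2n+1}$ lui-même, qui se fait par récurrence sur la dimension en utilisant simultanément les formules des traces stables pour tous les groupes endoscopiques intervenant. C'est précisément pour cette raison que, dans ce travail, on se contente de citer \cite{Arthur} et d'utiliser son énoncé comme boîte noire.
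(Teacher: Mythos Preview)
Your sketch is faithful to the spirit of Arthur's argument, but note that the paper itself does not prove this theorem: it is quoted from \cite{Arthur} as a black box, with only a short remark indicating that Arthur first establishes the partition into $L$-paquets for discrete series (with stable sum of characters) and then deduces the induced case via the $R$-group theory of Harish-Chandra and Silberger. Your outline follows exactly this two-step strategy and adds the correct explanation for point~(2) via the stabilisation of the twisted trace formula and the duality $\widehat{\SO_{2n+1}} = \Sp_{2n}(\C)$, so there is nothing to correct; you have simply written out more than the paper does.
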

\begin{rem}
	Arthur a en fait montré qu'il y a une partition de l'ensemble des représentations irréductibles de carré intégrable de $M'$ en sous-ensembles finis appelés $L$-paquets et que, pour chaque paquet, la somme des caractères des éléments du paquet est une distribution stable. La propriétés d'induction en est alors déduite à l'aide de la théorie des $R$-groupes (due à Harish-Chandra et Silberger).
\end{rem}
Notons $\distr^{\st}(G')$ l'espace des distributions stables sur $G'$ et $\distr_{\temp}^{\st}(G')$ le sous-espace des distributions qui sont combinaisons linéaires de caractères tempérés stable. Le résultat suivant nous a été communiqué par J.-L. Waldspurger. (Une preuve écrite n'est pour l'instant que disponible dans le cas d'un corps archimédien \cite{WaldspurgerStabilisation3}.)

\begin{thm}[\cite{WaldspurgerStabilisation3} pour le cas archimédien]
	\label{thm:densiteTempereeStable}

	Soit $f\in \hecke(G')$ tel que $D(f) = 0$ pour tout $D \in \distr_{\temp}^{\st}(G')$. Alors, $D(f) = 0$ pour tout $D \in \distr^{\st}(G')$.
\end{thm}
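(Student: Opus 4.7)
Pour démontrer ce théorème, le plan est de ramener l'énoncé à une version tordue du théorème de densité de Kazhdan sur $\widetilde{G^0}$, grâce à la machinerie endoscopique déjà exposée et à l'identification des caractères stables due à Arthur.

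Première étape : par dualité, les distributions stables $D \in \distr^{\st}(G')$ s'identifient aux formes linéaires sur l'espace quotient $I^{\st}(G') = \hecke(G')/\transfertZero(G')$ des intégrales orbitales stables. Dire que $D(f) = 0$ pour toute $D \in \distr^{\st}(G')$ équivaut donc à l'annulation de toutes les intégrales orbitales stables semi-simples régulières de $f$. L'énoncé à prouver se réécrit ainsi : si $\OrbInt_{G'}^{\st}(X, f) \ne 0$ pour un certain $X$ semi-simple régulier, alors il existe une distribution $D \in \distr_{\temp}^{\st}(G')$ telle que $D(f) \ne 0$.

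Deuxième étape : via le transfert endoscopique $\widetilde{G^0} \to G'$ (dont le facteur de transfert est trivial), le quotient $I^{\st}(G')$ s'identifie à un quotient convenable de l'espace des intégrales orbitales $\theta$-tordues sur $\widetilde{G^0}$. Dualement, les distributions stables sur $G'$ se remontent en des distributions $G^0$-invariantes tordues sur $\widetilde{G^0}$. D'après le théorème \ref{thm:arthur}, cette correspondance envoie le caractère stable tempéré $\sum_{\sigma \in \Sigma} \Tr(\Ind_{P'}^{G'} \sigma)$ associé à un $L$-paquet $\Pi$ sur le caractère tordu $\Tr_{\widetilde{G^0}}(\tau_\Pi^+)$, et toute représentation tempérée symplectique $\tau_\Pi$ est atteinte. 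Le problème est donc ramené à l'assertion suivante sur $\widetilde{G^0}$ : une fonction $f' \in \hecke(\widetilde{G^0})$ dont les intégrales orbitales $\theta$-tordues régulières ne sont pas toutes nulles est distinguée par un caractère tordu tempéré $\Tr_{\widetilde{G^0}}(\pi^+)$.

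Troisième étape : cette dernière assertion est la version tordue du théorème classique de densité de Kazhdan. Pour l'obtenir, on chercherait à adapter la stratégie de la preuve p-adique standard : développer un coefficient matriciel $G^0$-fini convenable en utilisant la formule de Plancherel-Harish-Chandra, puis l'appliquer à $f'$ pour obtenir une expression spectrale de l'intégrale orbitale tordue en fonction des caractères tordus tempérés. La constance de la mesure de Plancherel sur les $L$-paquets (corollaire \ref{cor:plancherelLpaquets}) permettrait de regrouper les termes, faisant apparaître directement les distributions $\sum_{\sigma \in \Sigma} \Tr(\Ind \sigma)$.

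L'obstacle principal est précisément cette étape de densité tordue : elle suppose de disposer d'une formule de Plancherel tordue bien normalisée sur $\widetilde{G^0}$, ce qui requiert les résultats d'Arthur sur les entrelacements tordus et dépend conditionnellement de la stabilisation de la formule des traces tordue. Dans le cas archimédien, une preuve rédigée est donnée par Waldspurger dans \cite{WaldspurgerStabilisation3}; l'adaptation au cas $p$-adique devrait suivre la même stratégie en remplaçant l'analyse harmonique réelle par son analogue non archimédien.
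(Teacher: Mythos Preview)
The paper does not actually prove this theorem: it is merely stated, attributed to Waldspurger, with the remark that a written proof is for the moment only available in the archimedean case (\cite{WaldspurgerStabilisation3}). So there is no ``paper's own proof'' to compare with; the theorem is used as a black box in the proof of Corollary~\ref{cor:plancherelLpaquets}.

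That said, your proposed route has two genuine problems beyond the acknowledged incompleteness of the twisted density step.

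\textbf{Circularity.} In your third step you invoke Corollary~\ref{cor:plancherelLpaquets} to regroup the Plancherel terms along $L$-paquets. But in this paper that corollary is deduced \emph{from} Theorem~\ref{thm:densiteTempereeStable} (look at its proof: ``En admettant le cas non-archimédien du théorème~\ref{thm:densiteTempereeStable}\ldots''). You cannot use it here without begging the question.

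\textbf{Leakage to the wrong endoscopic group.} Your second step proposes to transport the problem to $\widetilde{G^0}$ and then appeal to a twisted Kazhdan density statement. Even granting that every class in $I^{\st}(G')$ lifts to some $f'\in\hecke(\widetilde{G^0})$ (surjectivity of transfer, which you do not justify), twisted Kazhdan density would only produce some $\theta$-stable tempered $\pi$ of $G^0$ with $\Tr_{\widetilde{G^0}}(\pi^+)(f')\ne 0$. Such a $\pi$ is autodual, hence either symplectic or orthogonal; only the symplectic ones correspond via Theorem~\ref{thm:arthur} to stable tempered characters of $G'=\SO_{2n+1}$. An orthogonal $\pi$ would instead come from the other principal endoscopic datum $\Sp_{2n}$, and its twisted character is not the transfer of anything in $\distr_{\temp}^{\st}(G')$. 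So the reduction does not close: you would need an additional argument (essentially the full stable spectral decomposition on the twisted side, separating the endoscopic contributions) to rule out the orthogonal case, and that is exactly the content of the stabilisation work you are trying to avoid.
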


Notons $dm(\pi )$ la mesure de Plancherel sur l'espace $\frak T(G')$ des représentations irréductibles tempérées de $G'$, i.e. la mesure sur $\frak T(G')$ telle que l'on ait, pour tout $f'\in \hecke(G')$, $f'(1) = \int_{\frak T(G')} \Tr(\pi(f')) \, dm(\pi )$.

\begin{cor}[\cite{ShahidiLanglandsConjecture} 9.3]
	\label{cor:plancherelLpaquets}
	La mesure $dm(\pi )$ sur $\frak T(G')$ est constante sur les $L$-paquets.
\end{cor}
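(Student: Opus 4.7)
Le plan est de comparer $dm$ à sa symétrisée sur les $L$-paquets. Notons $c_\pi$ la densité de $dm$ en $\pi \in \frak T(G')$. Pour chaque $L$-paquet tempéré (fini) $\Pi$ de $G'$, posons $\bar c(\Pi) = \frac{1}{|\Pi|} \sum_{\pi' \in \Pi} c_{\pi'}$ et définissons $\widetilde{dm}$ en affectant à tout $\pi \in \Pi$ la densité $\bar c(\Pi)$. Cette mesure est constante sur chaque $L$-paquet par construction, et il s'agit de prouver $dm = \widetilde{dm}$. Par la formule de Plancherel $f'(1) = \int \Tr(\pi(f'))\, dm(\pi)$, cela équivaut à l'annulation de la distribution
\[
	D(f') = f'(1) - \int_{\frak T(G')} \Tr(\pi(f'))\, \widetilde{dm}(\pi) = \int_{\frak T(G')} \Tr(\pi(f'))\, (dm - \widetilde{dm})(\pi).
\]

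Première étape : $D$ est une distribution stable. D'une part, $f' \mapsto f'(1)$ est stable sur $G'$ quasi-déployé (c'est implicite dans la proposition \ref{prop:transfertIntegraleOrbitale}, car seules les distributions stables se transfèrent). D'autre part, en regroupant les $\pi$ par paquet, on a
\[
	\int_{\frak T(G')} \Tr(\pi(f'))\, \widetilde{dm}(\pi) = \int \bar c(\Pi_\nu)\, \Theta_{\Pi_\nu}(f')\, d\nu,
\]
où $\Theta_\Pi := \sum_{\pi \in \Pi} \Tr(\pi)$ est stable d'après le théorème \ref{thm:arthur}; cette intégrale est donc également stable. Ainsi $D$ est la différence de deux distributions stables.

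Seconde étape : $D$ est tempérée, de densité spectrale $\delta_\pi := c_\pi - \bar c(\Pi)$, laquelle vérifie $\sum_{\pi \in \Pi} \delta_\pi = 0$ sur chaque $L$-paquet par construction. Or par définition $\distr_{\temp}^{\st}(G')$ est l'espace engendré par les caractères tempérés stables, lesquels sont les $\Theta_\Pi$ par le théorème \ref{thm:arthur}; les densités spectrales de ces caractères sont constantes sur chaque paquet. Une distribution stable tempérée a donc nécessairement une densité constante sur chaque paquet. Appliqué à $D$, ceci combiné à $\sum_{\pi \in \Pi} \delta_\pi = 0$ entraîne $\delta_\pi \equiv 0$, c'est-à-dire $D = 0$ et $dm = \widetilde{dm}$.

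Le point délicat est d'identifier rigoureusement $\distr_{\temp}^{\st}(G')$ à (l'adhérence topologique de) l'espace engendré par les $\Theta_\Pi$ en prenant en compte la variation continue des paquets selon les caractères non ramifiés, et de manipuler soigneusement la décomposition spectrale de $dm$ selon les orbites de caractères non ramifiés agissant sur les paquets de séries discrètes des Levis. Une variante consiste à utiliser directement le théorème \ref{thm:densiteTempereeStable} : $D$ étant stable, elle s'annule sur l'idéal $\bigcap_\Pi \ker \Theta_\Pi$, et le théorème de Paley-Wiener de Bernstein-Deligne-Kazhdan pour $\hecke(G')$ assure que l'on peut y réaliser tout profil lisse à support compact $(a_\pi(\nu))_\pi$ vérifiant $\sum_\pi a_\pi(\nu) = 0$, d'où $\delta_\pi \equiv 0$.
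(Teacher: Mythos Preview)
Your main argument contains a genuine gap that makes it essentially circular. The crucial step is the claim ``Une distribution stable tempérée a donc nécessairement une densité constante sur chaque paquet.'' Knowing that $\distr_{\temp}^{\st}(G')$ is \emph{defined} as the span of the $\Theta_\Pi$ does not tell you that every distribution which is simultaneously stable and an integral of tempered characters lies in (the closure of) that span; this implication is precisely what requires proof. Worse, if that implication were already available, you could apply it directly to the stable tempered distribution $f'\mapsto f'(1)=\int c_\pi\,\Tr\pi(f')\,d\pi$ and conclude immediately that $c_\pi$ is constant on packets --- the whole detour through $\widetilde{dm}$ and $D$ would be superfluous. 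So the symmetrisation construction buys nothing, and the heart of the matter is left unjustified.

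The ``variante'' you sketch in the last paragraph is the actual proof, and it coincides with the paper's argument. The paper proceeds exactly this way: use Paley--Wiener (Bernstein--Deligne--Kazhdan) to build test functions $f_w$ whose traces realise a prescribed profile $(w_\sigma)$ with $\sum_\sigma w_\sigma=0$ on each packet and zero elsewhere; then $\Theta_\Pi(f_w)=0$ for every $\Pi$, so Theorem~\ref{thm:densiteTempereeStable} gives $D'(f_w)=0$ for every stable $D'$, in particular $f_w(1)=0$; finally the Plancherel formula reads $0=\sum_\sigma d(\sigma)\,w_\sigma$, forcing the degrees (resp.\ densities) to be constant on the packet. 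The paper carries this out first for discrete-series packets and then for the general induced case with the continuous unramified parameter, which is the careful handling you flag as ``le point délicat''. You should promote your variant to the main argument and drop the symmetrisation step.
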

\begin{proof}
	On note $\mathcal{M}$ un ensemble de représentants des classes de conjugaisons sous-groupes de Levi de $G'$, si $M \in \mathcal{M}$ on note $E_2(M)$ l'ensemble des classes d'équivalences de représentations irréductibles de $M$ de la série discrète, $\overline{E}_2(M)$ l'ensemble des $L$-paquets de séries discrètes, $d(\sigma)$ désigne le degré formel de $\sigma$ et $\Tr \hat{\sigma}$ est le caractère de $\hat{\sigma} = \Ind_{MN}^{G'} \sigma$. La preuve donnée dans \cite{ShahidiLanglandsConjecture} repose sur la conjecture \cite{ShahidiLanglandsConjecture} 9.2 qui permet d'affirmer l'existence de $\lambda_G(\Sigma) \in \C$ tel que pour tout $f \in \hecke(G)$, on ait
	\[
		f(1) = \sum_{M \in \mathcal{M}} \int_{\Sigma \in \overline{E}_2(M)} \lambda_G(\Sigma) \pars{\sum_{\sigma \in \Sigma} \chi_{\hat{\sigma}}(f)} \, \D \sigma
	\]
	Le théorème \ref{thm:densiteTempereeStable} n'est pas équivalent à cette conjecture mais permet quand même de déduire le résultat en adaptant les arguments de \cite{ShahidiLanglandsConjecture}. Expliquons comment.	
	
	Soit $M \in \mathcal{M}$, et $\Sigma \in \overline{E}_2(M)$ un $L$-paquet de série discrète de $M$. On note
	\[
		\Tr \hat{\Sigma} = \sum_{\sigma \in \Sigma} \Tr \hat{\sigma}
	\] 
	On sait d'après \ref{thm:arthur} que c'est une distribution stable.
	
	Dans le cas où $\Sigma$ est déjà un $L$-paquet de $G'$. La mesure de Plancherel est ici le degré formel. On note $V = \R^{\Sigma}$ et on définit $W$ l'orthogonal de l'espace engendrée par le vecteur $(1)_{\sigma \in \Sigma} \in V$.
	
	Soit $v = (v_{\sigma})_{\sigma \in \Sigma} \in V$, Paley-Wiener pour la trace de Bernstein-Deligne-Kazhdan \cite{BDK}, on peut trouver $f_v \in \hecke(G')$ telle que pour tout représentation irréductible $\pi \notin \Sigma$ on ait $\Tr \pi(f_v) = 0$ et pour $\sigma \in \Sigma$, on ait $\Tr \sigma(f_v) = v_{\sigma}$.
	
	Soit $w \in W$. Si $\chi$ est un caractère tempérée stable associé à un $L$-paquet différent de $\Sigma$ alors $\chi(f_w) = 0$. Par ailleurs, comme $w \in W$ on a alors
	\[
		\sum_{\sigma \in \Sigma} \Tr \hat{\sigma}(f_w) = 0
	\]
	Donc pour toute distribution tempérée stable $D \in \distr_{\temp}^{\st}(G')$, on a $D(f_w) = 0$. En admettant le cas non-archimédien du théorème \ref{thm:densiteTempereeStable}, alors on déduit que pour toute distribution stable $D \in \distr^{\st}(G')$, alors $D(f_w) = 0$. En particulier, la distribution $\OrbInt_{G'}(0, \cdot)$, est stable donc $f_w(0) = 0$. La formule de Plancherel (voir \cite{Waldspurger}) donne donc
	\begin{align*}
		0 = f_w(0) &= \sum_{M \in \mathcal{M}} \int_{\sigma \in E_2(M)} \Tr \hat{\sigma}(f_w) \ dm(\sigma) \\
		&= \sum_{\sigma \in \Sigma} d(\sigma) \Tr \sigma(f_w) \\
		&= \sum_{\sigma \in \Sigma} d(\sigma) w_{\sigma}
	\end{align*}
	Donc le vecteur $(d(\sigma))_{\sigma \in \Sigma} \in V$, est orthogonal à tout $w \in W$, il est donc proportionel au vecteur $(1)_{\sigma \in \Sigma} \in V$, ce qui signifie que le degré formel $d(\sigma)$ est constant pour tout $\sigma \in \Sigma$. Ce qui est bien le résultat voulu.
	
	On considère $V$ l'espace des fonctions $v : \bigcup_{N \in \mathcal{M}} E_2(N) \to \C$ définies sur l'ensemble des classes d'équivalences de séries discrètes de sous-groupes de Levi de $G'$ telles que
	\begin{enumerate}
		\item Si $\tau \in E_2(N)$, alors $v(\tau) = 0$ sauf si $(N, \tau)$ est conjugué à $(M, \sigma_{\nu})$ avec $\sigma \in \Sigma$ et $\nu \in i \mathfrak{a}^*$.
		\item $v$ ne dépend de que la classe de conjugaison de $M$.
		\item L'application $h_{\sigma} : \nu \mapsto h(\sigma_{\nu})$ est régulière.
	\end{enumerate}
	Pour tout élément $v \in V$, Shahidi construit dans \cite{ShahidiLanglandsConjecture} 9.3, une fonction $f_v \in \hecke(G')$, telle que
	\[
		\Tr \hat{\tau}(f_v) = v(\tau)
	\]
	pour toute $\tau$ représentation de la série discrète d'un sous-groupe de Levi de $G'$.
	On considère $W$ la sous-algèbre des fonctions $w \in V$ telles que
	\[
		\sum_{\sigma \in \Sigma} w_{\sigma} = 0
	\]
	Si $w \in W$, alors on vérifie que pour toute distribution tempérée stable $D \in \distr_{\temp}^{\st}(G')$, on a $D(f_w) = 0$. Toujours d'après \ref{thm:densiteTempereeStable}, on déduit que pour toute distribution stable $D \in \distr^{\st}(G')$, alors $D(f_w) = 0$. En particulier, $f_w(0) = 0$. La formule de Plancherel (voir \cite{Waldspurger}) donne donc
	\begin{align*}
		0 = f_w(0) &= \sum_{M \in \mathcal{M}} \int_{\sigma \in E_2(M)} \Tr \hat{\sigma}(f_w) \ dm(\sigma) \\
		&= \sum_{\sigma \in \Sigma} \int_{i \mathfrak{a}^*} w_{\sigma}(\nu) .  m_{\sigma}(\nu) \ d \nu \\
	\end{align*}
	Cela étant valable pour tout $w \in W$, on en déduit que $\sigma \mapsto m_{\sigma}$ est constante sur $\Sigma$ (en effet si $\sigma_1 , \sigma_2 \in \Sigma$ et $b$ une fonction régulière sur $i\mathfrak{a}^*$, on considère l'élément $w \in W$ tel que $w_{\sigma_1} = - w_{\sigma_2} = b$ et $w_{\sigma} = 0$ si $\sigma \ne \sigma_1, \sigma_2$. Alors $\int_{i \mathfrak{a}^*} b(\nu) . (m_{\sigma_1}-m_{\sigma_2})(\nu) \ d \nu = 0$ donc $m_{\sigma_1}-m_{\sigma_2} = 0$ puisque l'égalité est valable pour tout $b$).
\end{proof}

Notons $\Irr(G^{0})^s_{\temp}$ l'ensemble des représentations lisses irréductibles tempérées symplectiques de $G^{0}$. D'après le théorème {\ref{thm:arthur} 2)}, c'est un espace fibré sur $\frak T (G')$. Les fibres sont finies, mais la projection $\frak T (G')$ sur $\Irr(G^{0})^s_{\temp}$ n'est en général pas localement triviale. \`A l'aide de l'identité dans {\ref{thm:arthur} 2)}, on définit une structure topologique sur $\Irr(G^{0})^s_{\temp}$ déduite de la décomposition de $\frak T(G')$ en composante connexe par la théorie de la formule de Plancherel de Harish-Chandra \cite{Waldspurger}. On peut alors munir $\Irr(G^{0})^s_{\temp}$ de la mesure induite par la fibration sur $\Irr(G^{0})^s_{\temp}$, i.e. $dm (\tau _{\Pi }) = \sum _{\pi\in\Pi } dm(\pi )$ pour $\Pi $ un $L$-paquet dans $\frak T (G')$. On constate que la fonction qui associe à un élément $\tau $ de $\Irr(G^{0})^s_{\temp}$ le nombre d'éléments dans sa préimage dans $\frak T (G')$ est presque partout égale à une fonction localement constante.

\begin{thm}
	\label{thm:conjecture}
	Il existe $\lambda > 0$ tel que pour tout $f\in \hecke(G^{0})$, on a

	$$\OrbInt_{\widetilde{G^{0}}}(\eta ,f) = \lambda \int _{\Irr(G^{0})^s_{\temp}} \Tr_{\widetilde{G^0}}(\tau^+(f)) \, dm(\tau ).$$
\end{thm}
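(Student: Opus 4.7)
The plan is to assemble the three main ingredients developed earlier in the paper: the transfer of the unit orbital integral from $G'$ to $\widetilde{G^0}$ (Proposition \ref{prop:transfertIntegraleOrbitale}), Harish-Chandra's Plancherel formula on $G'$ combined with the constancy of the Plancherel measure on $L$-packets (Corollaire \ref{cor:plancherelLpaquets}), and Arthur's stable character identity for tempered $L$-packets of $G'$ (Théorème \ref{thm:arthur}). These three results will be applied in exactly this order, transforming the left-hand side into the right-hand side.

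First I would fix $f \in \hecke(G^0)$ and choose an endoscopic transfer $f^{G'} \in \hecke(G')$. By Proposition \ref{prop:transfertIntegraleOrbitale} there is a constant $\mu > 0$ such that $\mu \OrbInt_{\widetilde{G^0}}(\eta, \cdot)$ is the transfer of the distribution $\OrbInt_{G'}(1, \cdot)$. By the defining duality between transfer of functions and transfer of distributions this gives
$$\mu \, \OrbInt_{\widetilde{G^0}}(\eta, f) = \OrbInt_{G'}(1, f^{G'}) = f^{G'}(1).$$
Next, applying the Plancherel formula recalled before Corollaire \ref{cor:plancherelLpaquets},
$$f^{G'}(1) = \int_{\frak T(G')} \Tr(\pi'(f^{G'})) \, dm(\pi').$$

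Now I would regroup the integral on $\frak T(G')$ according to the fibration $\frak T(G') \to \Irr(G^0)^s_{\temp}$ furnished by Théorème \ref{thm:arthur}, $\pi' \mapsto \tau_{\Pi(\pi')}$. Because Corollaire \ref{cor:plancherelLpaquets} asserts that $dm$ is constant on each (finite) fiber, and because this is precisely how $dm(\tau)$ was defined just before the statement of the theorem, Fubini yields
$$f^{G'}(1) = \int_{\Irr(G^0)^s_{\temp}} \left( \sum_{\pi' \in \Pi(\tau)} \Tr(\pi'(f^{G'})) \right) dm(\tau),$$
where $\Pi(\tau)$ denotes the $L$-packet associated with $\tau$. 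Finally, Théorème \ref{thm:arthur} (2) identifies the bracketed sum, for each $\tau$, with the value of the twisted character of $\tau^+$ on the test function $f$:
$$\sum_{\pi' \in \Pi(\tau)} \Tr(\pi'(f^{G'})) = \Tr_{\widetilde{G^0}}(\tau^+(f)).$$
Combining the displayed equations and setting $\lambda = 1/\mu > 0$ gives the claimed formula.

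The main delicate point I expect is in the third step: one must verify that the disintegration of $dm$ along $\frak T(G') \to \Irr(G^0)^s_{\temp}$ is genuinely defined in a measurable way so that Fubini applies, exploiting the observation made just before the statement that the cardinality of the fibers is almost everywhere locally constant. All the truly deep input has already been packaged into the statements we are invoking: existence of the transfer and positivity of $\mu$ rest on Ngô's non-standard fundamental lemma, while the character identity used in the last step depends on the (conditional) stabilization of the twisted trace formula as in \cite{Arthur}. Once these are granted, the proof is a short formal manipulation.
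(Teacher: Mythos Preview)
Your proposal is correct and follows essentially the same route as the paper: apply Proposition \ref{prop:transfertIntegraleOrbitale} to identify (up to the positive constant) the orbital integral with $f^{G'}(1)$, expand this via the Plancherel formula on $G'$, regroup into $L$-packets using Corollaire \ref{cor:plancherelLpaquets}, and then invoke Théorème \ref{thm:arthur} to replace each packet sum by the twisted trace $\Tr_{\widetilde{G^0}}(\tau^+(f))$. The paper's own proof is the same three-line argument; your version simply spells out the bookkeeping (in particular the relation $\lambda = 1/\mu$) and flags the measurability of the fibration, which the paper handles implicitly via the remark on almost-everywhere local constancy just before the statement.
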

\begin{proof}
	En effet, par \ref{prop:transfertIntegraleOrbitale}, le côté gauche est égal à $f^{G'}(1)$. Le théorème {\ref{thm:arthur}} appliqué à l'expression pour $f^{G'}(1)$ donnée par la formule de Plancherel donne alors le c\^oté droit de l'égalité, en utilisant le corollaire \ref{cor:plancherelLpaquets}.
\end{proof}




\bibliographystyle{alpha}
\bibliography{biblio}

\end{document}